\newcommand{\labbel}{\label}
\newcommand{\bm}[1] { \, ${\boldmath$#1$}$} 
\newcommand{\bmw}[1] {${\boldmath$#1$}$}
\newtheorem{theorem}{Theorem}[section]
\newtheorem{lemma}[theorem]{Lemma}
\newtheorem{proposition}[theorem]{Proposition} 
\newtheorem{corollary}[theorem]{Corollary} 
\newtheorem*{claim}{Claim}
\newtheorem*{theorem*}{Theorem}
\newtheorem*{corollary*}{Corollary}
\theoremstyle{definition}
\newtheorem*{JDSP}{The J{\'o}nsson distributivity spectrum problem}
\newtheorem*{JDSPG}
{The generalized J{\'o}nsson distributivity spectrum problem}
\theoremstyle{remark}
\begin{document}
 
\title[J{\'o}nsson distributivity spectrum]
{On the  J{\'o}nsson  distributivity spectrum}

\author{Paolo Lipparini} 
\address{ 
Dipartimento Distributivo di Matematica\\Viale della  Ricerca
 Scientifica\\Universit\`a di Roma ``Tor Vergata'' 
\\I-00133 ROME ITALY}
\urladdr{http://www.mat.uniroma2.it/\textasciitilde lipparin}

\keywords{Congruence distributive variety; (directed) J{\'o}nsson terms;
J{\'o}nsson distributivity spectrum; congruence identity;
identities for reflexive and admissible relations}

\subjclass[2010]{08B10}
\thanks{Work performed under the auspices of G.N.S.A.G.A. Work 
partially supported by PRIN 2012 ``Logica, Modelli e Insiemi''}

\begin{abstract}
Suppose throughout that $\mathcal V$ is a
 congruence distributive variety. If
$m \geq 1$, let $ J _{ \mathcal V} (m) $ be the smallest natural
number $k$ such that the congruence identity 
 $\alpha ( \beta \circ \gamma \circ \beta \dots ) \subseteq \alpha \beta 
\circ  \alpha \gamma \circ \alpha \beta \circ \dots $
holds in $\mathcal V$, with $m$ occurrences of  $ \circ$ 
on the left  and $k$ occurrences of $\circ$ on the  
right.
We show that if $ J _{ \mathcal V} (m) =k$, then 
$ J _{ \mathcal V} (m \ell )
\leq  k \ell $, for every natural number $\ell$.   
The key to the proof is  an identity
which, through a variety,
 is equivalent to the above congruence identity,
but involves also reflexive and admissible relations. 
If $ J _{ \mathcal V} (1)=2 $, that is, $\mathcal V$ 
is $3$-distributive, then $ J _{ \mathcal V} (m) \leq m $,
for every $m \geq 3$ (actually, a more general result is presented
which holds even in nondistributive varieties).   
If $\mathcal V$ is $m$-modular, that is, congruence modularity 
of $\mathcal V$ is witnessed by $m+1$ Day terms, then
$  J _{ \mathcal V} (2) 
\leq  J   _{ \mathcal V} (1) + 2m^2-2m -1 $.
Various  problems are stated at various places.
\end{abstract}

\maketitle

\section{The J{\'o}nsson distributivity spectrum} \labbel{sp} 

Obviously, an algebra $\mathbf A$ is congruence distributive if and only if,
for every natural number $m \geq 2$, 
the congruence identity
$ 
\alpha ( \beta \circ _m \gamma ) \subseteq \alpha \beta + \alpha \gamma 
$ 
holds in $\mathbf {Con} (\mathbf A)$ 
(more precisely, in the algebra of reflexive and admissible relations on $\mathbf A$).
Here $\alpha$, $\beta$, \dots \ 
are intended to vary among congruences of $\mathbf A$,
juxtaposition denotes intersection,
$+$ is join in the congruence lattice  and
$ \beta \circ _m \gamma $  denotes
$ \beta \circ \gamma \circ \beta \dots$
with $m$ factors, that is, 
with $m-1$ occurrences of $\circ$.

Let us say that a congruence  identity
holds in some variety $\mathcal V$ if
it holds in every algebra in $\mathcal V$.
By a celebrated theorem by J{\'o}nsson \cite{JD}, 
a milestone both in the theory  
 of Maltsev conditions and 
in the theory
of congruence distributive varieties,
a variety $\mathcal V$ is congruence distributive if and only if 
there is some $n$ such that the congruence identity
\begin{equation}\labbel{d2}    
 \alpha ( \beta \circ \gamma  ) \subseteq \alpha \beta 
\circ _n \alpha \gamma 
  \end{equation} 
holds in $\mathcal V$. 
In other words, for varieties, 
taking $m=2$ in the above paragraph is already enough. 
J{\'o}nsson actual statement in \cite{JD} is about a set 
of  terms naturally arising from
identity \eqref{d2}, rather than about the
identity itself. J{\'o}nsson terms shall be recalled later.
Stating results with regard to congruence  identities
rather than terms
is simpler and easier to understand,
while proofs usually require the corresponding terms. 
 Compare the perspicuous  discussion in  
 Tschantz \cite{T}.

 J{\'o}nsson proof in \cite{JD} goes on by showing that
if some variety $\mathcal V$ has  terms
witnessing  \eqref{d2}, then, 
for every $m$, the inclusion
$ 
\alpha ( \beta \circ _m \gamma ) \subseteq \alpha \beta + \alpha \gamma 
$ 
 holds in $\mathcal V$.
(By the way, let us mention that J{\'o}nsson paper \cite{JD} 
contains a big deal of other fundamental results about distributive 
varieties with significant and unexpected applications to lattices,
among other.)
It follows easily from J{\'o}nsson  proof that, for every $m$, there is some
$k$ (which depends only on $m$ and  on the $n$
given by \eqref{d2}, but otherwise not on  the variety) such that 
\begin{equation}\labbel{d3}  
\tag*{$(m,k)$-dist}
 \alpha ( \beta \circ _{m} \gamma ) \subseteq 
\alpha \beta \circ _{k} \alpha \gamma 
\end{equation}      

A variety is $\Delta_{k}$
 in the sense of  \cite{JD} if and only if  it
satisfies
 $(2,k)$-dist. 
Such varieties are sometimes called 
\emph{$k$-distributive}, or are said to \emph{have 
$k+1$ J{\'o}nsson terms}.
If $k$ is minimal with the above property (with $m=2$),
$\mathcal V$ is said to be of \emph{J{\'o}nsson level} $k$
in Freese and Valeriote \cite{FV}.

If we slightly modify the  proof of J{\'o}nsson theorem as presented
in Burris and Sankappanavar \cite[Theorem 12.6]{BS} or in  McKenzie,  McNulty, 
and Taylor \cite[Theorem 4.144]{MMT},
 we see that 
if a variety $\mathcal V$ satisfies $(2,k+1)$-dist, then 
$\mathcal V$ satisfies $( \ell +1 , k \ell +1 )$-dist,
for every $ \ell \geq 1$. 
This result is also a special case of Corollary \ref{ell} below.
To formulate this and other results in a more concise way,
 it is natural to introduce the following 
 \emph{J{\'o}nsson 
distributivity function} $ J _{ \mathcal V} $
of a congruence distributive variety $\mathcal V$.
For  every positive natural number $m$, 
we set  $ J _{ \mathcal V} (m) $ 
to be the least $k$ such that 
$\mathcal V$ satisfies the identity $(m+1,k+1)$-dist.
The ``shift by $1$'' in the above notation
 will greatly simplify subsequent statements. 
For example, the above remark is more neatly 
stated by asserting that if $ J _{ \mathcal V} (1) = k $,
then $ J _{ \mathcal V} (\ell) \leq k \ell $, for every positive $\ell $.
We can now ask the following problem.

\begin{JDSP}
 \labbel{prob1}
\emph{Which functions (with domain
the set of positive natural numbers) can be realized as
 $J _{ \mathcal V} $,
for some congruence distributive variety  $\mathcal V$?}
 \end{JDSP}

Obviously, $J _{ \mathcal V} $ is a monotone function.
By the above comments,
if  $ J _{ \mathcal V} (1) = 1$,
then  $ J _{ \mathcal V} (m) \leq m$,
for every positive $m $.
Moreover if, for some $k$,  $ J _{ \mathcal V} (k) < k$,
 then $\mathcal V$ is   
$k$-permutable: just take $ \alpha =1$ in equation $(m,k)$-dist. 
If $\mathcal V$ is   
$k$-permutable, then 
$ J _{ \mathcal V} (m) < k$,
for every $m$. 

As a consequence of the above observations, if $ J _{ \mathcal V} (1) = 1$,
then
$ J _{ \mathcal V} $ is either the identity function,
or is the identity up to some point and then it is a constant function.
An example of the first eventuality is the variety of lattices;
on the other hand, 
in the variety of 
\emph{$n$-Boolean algebras} from  Schmidt
\cite{S} we have
  $ J _{ \mathcal V} (m) = \min \{ m, n\} $.
Indeed, $n$-Boolean algebras have a lattice operation,
  are $n+1$-permutable but, in general, not $n$-permutable;
see also Hagemann and Mitschke \cite{HM} and \cite[Example 2.8]{cd}. 
For convenience, we shall use J{\'o}nsson paper \cite{cd} 
as a reference for this and other examples. The author 
believes that this is the appropriate place to mention 
that J{\'o}nsson \cite{cd} 
has had a profound influence in his mathematical formation.

 Mitschke \cite{M} shows that the variety 
of \emph{implication algebras}
is $3$-permutable, not permutable, $\Delta_3$ and not $\Delta_2$.
See also \cite[Example 1]{HM} and \cite[Example 2.6]{cd}. 
Hence in the variety of implication algebras
we have
$ J _{ \mathcal V} (m) = 2 $,
for every $m $.
Freese and Valeriote \cite{FV},
using the reduct of an algebra formerly constructed by 
 Kearnes \cite{K},
show that, for every $n$,  there is an
 $n$-permutable variety which is $\Delta_{n}$ and  not $\Delta_{n-1}$.
See \cite[p. 70--71]{FV}.   
Thus $ J _{ \mathcal V} $
is constantly $n-1$ in this variety.

 We now observe that the set of those functions
which can be represented as $ J _{ \mathcal V} $,
for some variety, is closed under pointwise maximum.
This is immediate from the result that 
the non-indexed product of    two varieties
$\mathcal V$ and $\mathcal V'$
satisfies exactly the same strong Maltsev conditions
satisfied both by $\mathcal V$ and $\mathcal V'$.
See  Neumann \cite{N}, Taylor \cite{Ta} or \cite[p. 368--369]{cd}.
 We also need the easy fact
that, for every $m$ and $k$, the  condition
$ J _{ \mathcal V} (m) \leq k$ is equivalent to a strong Maltsev 
condition; for example, this is a consequence of the equivalence of (A) and (B)
in Theorem \ref{smile} below.
 
\begin{proposition} \labbel{nip}
If $\mathcal V$ and $\mathcal V'$
are congruence distributive varieties,
then their non-indexed product $\mathcal V''$ 
is such that 
$ J _{ \mathcal V''} (m) = \allowbreak 
\max  \allowbreak \{  J _{ \mathcal V} (m) , \allowbreak  J _{ \mathcal V'} (m)  \}  $,
for every positive natural number $m$. 
 \end{proposition}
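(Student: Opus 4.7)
The proposition is a direct combination of two facts highlighted in the paragraph preceding its statement. First, by the equivalence of clauses (A) and (B) of Theorem \ref{smile} below, for each fixed pair $m,k$ the inequality $J_{\mathcal W}(m)\le k$ can be rephrased as a strong Maltsev condition $\Sigma_{m,k}$ on the variable variety $\mathcal W$. Second, by Neumann \cite{N} and Taylor \cite{Ta} (see also \cite[p.~368--369]{cd}), the non-indexed product of two varieties satisfies exactly those strong Maltsev conditions that are satisfied by both factors. These two ingredients will be combined in a purely bookkeeping fashion.

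A preliminary observation is that $\mathcal V''$ is itself congruence distributive, so that $J_{\mathcal V''}$ is defined. Indeed, congruence distributivity is itself a strong Maltsev condition (existence of a fixed finite family of J{\'o}nsson terms): setting $n=\max\{J_{\mathcal V}(1),J_{\mathcal V'}(1)\}+1$, both $\mathcal V$ and $\mathcal V'$ possess $n+1$ J{\'o}nsson terms, and fact (ii) transfers this property to $\mathcal V''$.

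For the equality itself, put $k_0=\max\{J_{\mathcal V}(m),J_{\mathcal V'}(m)\}$. By the definition of $J$, each of $\mathcal V$ and $\mathcal V'$ satisfies $\Sigma_{m,k_0}$, so by (ii) so does $\mathcal V''$, giving $J_{\mathcal V''}(m)\le k_0$. Conversely, setting $k'=J_{\mathcal V''}(m)$, the variety $\mathcal V''$ satisfies $\Sigma_{m,k'}$, and (ii) forces this condition to hold in each factor, yielding $J_{\mathcal V}(m)\le k'$ and $J_{\mathcal V'}(m)\le k'$, hence $k_0\le k'$.

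The substantive work is therefore entirely encapsulated in (i) and (ii); the only subtlety one must guard against is ensuring that Theorem \ref{smile} really produces a bona fide strong Maltsev condition, namely a finite conjunction of universally quantified term equations rather than a congruence-theoretic statement that might in principle require quantification over all algebras of the variety. This is precisely the content of the equivalence (A)$\Leftrightarrow$(B) in that theorem, so no independent verification is needed here.
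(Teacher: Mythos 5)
Your argument is exactly the one the paper relies on: the equivalence (A)$\Leftrightarrow$(B) in Theorem \ref{smile} turns $J_{\mathcal W}(m)\le k$ into a strong Maltsev condition, and the Neumann--Taylor result on non-indexed products then does the rest. The paper leaves this as an implicit one-paragraph remark preceding the proposition; your write-up fills in the same reasoning in slightly more detail (including the preliminary check that $\mathcal V''$ is congruence distributive), with no substantive departure.
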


We do not know whether,  for every pair  $\mathcal V$, $\mathcal V'$,
we always have some $\mathcal V''$ such that   
$ J _{ \mathcal V''} (m) = \allowbreak 
\min  \allowbreak \{  J _{ \mathcal V} (m) , \allowbreak  J _{ \mathcal V'} (m)  \}  $,
for every  $m$. 

If in Proposition \ref{nip} we consider the variety of lattices 
and the mentioned variety 
from \cite[p. 70--71]{FV}, we get 
$ J _{ \mathcal V''} (m) = \max \{ n-1, m\}$.
By taking the non-indexed product of the variety of 
$n'$-Boolean algebras and again the variety 
from \cite[p. 70--71]{FV}, with $ n \leq n'$,
we have  $ J _{ \mathcal V''} (m) = n-1$, for $m \leq n-1$
and 
  $ J _{ \mathcal V''} (m) = \min \{ m, n'\} $,
for $m > n-1$. 

The above examples suggest
that $ J _{ \mathcal V}(m) $
has little influence on the values
of $ J _{ \mathcal V} (m')$,
for $m' < m$.
On the other hand, we are going to show 
that $ J _{ \mathcal V}(m) $
puts some quite restrictive bounds on 
 $ J _{ \mathcal V} (m')$,
for $m' > m$, as we already mentioned for the easier case
$m=1$.

\section{Bounds on higher levels of the spectrum} \labbel{bo} 

Let $R$, $S$,  \dots \ be variables intended to be interpreted as
reflexive and admissible (binary) relations on some algebra.
If  $R$ is such a relation, let $R ^\smallsmile $ denote the \emph{converse}
of $R$, that is, $b \mathrel {R ^\smallsmile }  a $
 if and only if $a \mathrel R b  $.
In the next theorem   we show that, for a variety,
the congruence  identity $(m+1,k+1)$-dist   is equivalent to 
the relation identity $ \alpha (R \circ _{m} R ^\smallsmile  ) \subseteq
\alpha R \circ _{k} \alpha R ^\smallsmile $,
with the further provision that if $R$ can be expressed
as a composition, then $\alpha R$ 
and $\alpha R ^\smallsmile $
 factor out. See   
condition (3) in the next theorem  for a formal statement.
The latter provision is necessary, since, without it,
the case $m=1 \leq k$ would be trivially true 
in every variety and, 
for every $m > 1$
and using Day terms, it can be shown
that a variety is congruence modular if and only if there is 
some $k$ such that   
the relation identity 
$ \alpha (R \circ _{m} R ^\smallsmile  ) \subseteq
\alpha R \circ _{k} \alpha R ^\smallsmile $ holds.
Hence this identity alone is too weak for our purposes.
See \cite{ricm}. 

Recall that a \emph{tolerance} is a symmetric and reflexive
admissible relations. We shall prove Part (C) in the next theorem 
in the general case when $\alpha$ is a tolerance, rather than a congruence.
In particular, we get that, through a variety, 
the identity $(m,k)$-dist is equivalent to the same identity in which
$\alpha$ is only assumed to be a tolerance.
However this stronger version shall not be
used  in what follows, hence the reader might
 always assume to be in the simpler case 
in which  $\alpha$ is a congruence.

\begin{theorem} \labbel{smile}
For every variety $\mathcal V$
and integers $m, k \geq 1$,
 the following conditions are equivalent.
 \begin{enumerate}[(A)]  
 \item 
 $  J _{ \mathcal V} (m)  \leq k $,  that is, 
$\mathcal V$  satisfies the congruence identity $(m+1,k+1)$-dist
\begin{equation*}\labbel{ucac}      
 \alpha ( \beta \circ _{m+1} \gamma )
 \subseteq \alpha \beta \circ_{k+1} \alpha \gamma 
\end{equation*}  
(equivalently, we can ask that the free algebra in $\mathcal V$ generated by
$m+2$ elements satisfies the above identity.)
\item
$\mathcal V$ has $m+2$-ary  terms
$t_0, \dots, t_{k+1}$ such that the following identities hold in $\mathcal V$:
\begin{gather} \labbel{buh} 
\tag{B1}
  x  = t_0(x,x_1,x_2, x_3, \dots, x_{m}, x_{m+1}),    \\
\labbel{buhh}
\tag{B2}
  x =t_i(x,x_1,x_2, x_3, \dots, x_{m}, x), \quad\quad 
  \text{ for } 
0 \leq i \leq k+1, \\ 
 \labbel{aa} \tag{B3} 
\left \{
\begin{split}  
 t_{i}(x_0,x_0,x_2, x_2,x_4, x_4, \dots ) &=
t_{i+1}(x_0,x_0,x_2, x_2,x_4, x_4, \dots ), \\ 
  & \quad \quad\quad \quad\quad \quad \text{ for even $i$,\ } 
0 \leq i \leq k,
  \\ 
  t_{i}(x_0,x_1,x_1, x_3,x_3, \dots )
&=t_{i+1}(x_0,x_1,x_1, x_3,x_3, \dots ), \\ 
 &\quad\quad\quad \quad\quad \quad \text{ for odd $i$,\ } 
0 \leq i \leq k,
 \end{split}
\right .  \\ 
\labbel{uff}
\tag{B4}
 t_{k+1}(x_0,x_1,x_2, x_3, \dots, x_{m}, z)=z  
\end{gather}   
\item
For every algebra $\mathbf A \in \mathcal V$,
every positive integer $\ell$
(equivalently, for $\ell=1$),
every tolerance $\alpha$ of $\mathbf A$
and all  reflexive and admissible relations
$R$, $S_0$, \dots, $S_ \ell $ on $\mathbf A$, 
if $R= S_0 \circ S_1 \circ  \dots  \circ S_ \ell$
and    $ \Theta = \alpha S_0 \circ \alpha S_1 \circ  \dots  \circ \alpha S_ \ell$, then
\begin{equation}\labbel{uuu}
\tag{C1}
\alpha (R \circ_{m} R ^\smallsmile ) \subseteq 
\Theta \circ_ {k} \Theta ^\smallsmile 
  \end{equation}    
\end{enumerate} 
 \end{theorem}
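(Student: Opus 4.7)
The plan is to prove the cycle (C) $\Rightarrow$ (A) $\Rightarrow$ (B) $\Rightarrow$ (C): specifically, ($C$ for $\ell=1$) $\Rightarrow$ (A), (A) $\Rightarrow$ (B), and (B) $\Rightarrow$ ($C$ for arbitrary $\ell$), which together give all desired equivalences and justify the ``equivalently, for $\ell=1$'' clause.

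For ($C$ for $\ell=1$) $\Rightarrow$ (A), specialise (C1) with $S_0:=\beta$, $S_1:=\gamma$ two arbitrary congruences. Since $\beta$ and $\gamma$ are transitive and symmetric, the composition $R\circ_m R^\smallsmile$ with $R=\beta\circ\gamma$ collapses to $\beta\circ_{m+1}\gamma$; similarly $\Theta\circ_k\Theta^\smallsmile$ collapses to $\alpha\beta\circ_{k+1}\alpha\gamma$, and (C1) becomes the identity of (A).

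For (A) $\Rightarrow$ (B), work in the free algebra $F:=F_{\mathcal V}(x_0,\dots,x_{m+1})$. Let $\alpha:=\cg(x_0,x_{m+1})$ and let $\beta$ (respectively $\gamma$) be the congruence on $F$ generated by the pairs $(x_i,x_{i+1})$ for even (respectively odd) $i$ in $\{0,\dots,m\}$. Then $(x_0,x_{m+1})$ lies in $\alpha(\beta\circ_{m+1}\gamma)$, so by (A) there are $y_0=x_0,y_1,\dots,y_{k+1}=x_{m+1}$ in $F$ realising the right-hand chain; writing $y_j=t_j(x_0,\dots,x_{m+1})$ produces the required $(m{+}2)$-ary terms. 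Identities (B1) and (B4) encode the endpoints, and (B2) (respectively (B3)) follows by passing to the quotient that identifies $x_0$ with $x_{m+1}$ (respectively the quotients that collapse the generating pairs of $\beta$ or $\gamma$), which forces the relevant term-values to coincide. This is the classical J{\'o}nsson-style derivation, as in \cite[Theorem~4.144]{MMT}.

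For (B) $\Rightarrow$ (C), suppose $a\mathrel\alpha b$ and $a(R\circ_m R^\smallsmile)b$ with $R=S_0\circ\cdots\circ S_\ell$; let $c_0=a,c_1,\dots,c_m=b$ be the outer chain, with intermediate points coming from the $S_i$-factorisation of each $R$- or $R^\smallsmile$-step. Define $u_j:=t_j(c_0,c_1,\dots,c_m,b)$, so $u_0=a$ and $u_{k+1}=b$ by (B1) and (B4). Between successive $u_j,u_{j+1}$ introduce $\ell$ intermediate values by evaluating $t_j$ (or $t_{j+1}$) on tuples in which some $c_i$'s are replaced by the corresponding intermediate points of the relevant $R$-factorisation; admissibility of each $S_i$ transmits a single $S_i$-step through the term, while (B3) guarantees that at the collapsed tuple the values of $t_j$ and $t_{j+1}$ agree, closing the $\ell{+}1$-piece chain into a single $\Theta$- or $\Theta^\smallsmile$-step (the alternation in direction tracks the parity of $j$). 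The $\alpha$-component of each piece follows from (B2) (evaluated at $x:=a$ and $x:=b$), the relation $a\mathrel\alpha b$, and the admissibility of $\alpha$; in the tolerance case one notes that the construction never composes $\alpha$ with itself. The main obstacle is this last step's bookkeeping---aligning (B3)'s collapse identity with the $\ell{+}1$-factorisation of an $R$-step and tracking parity to alternate $\Theta$ with $\Theta^\smallsmile$---after which every required relation reduces to a single application of admissibility, (B2), or (B3).
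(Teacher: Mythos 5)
Your overall cycle (C)$\Rightarrow$(A)$\Rightarrow$(B)$\Rightarrow$(C) and the two directions (C)$\Rightarrow$(A), (A)$\Rightarrow$(B) match the paper exactly and are fine.

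The gap is in (B)$\Rightarrow$(C), specifically in your choice of chain elements. You define $u_j := t_j(c_0,c_1,\dots,c_m,b)$ for $0\le j\le k+1$. First, this produces $k+2$ candidate chain elements, one too many: the relation $\Theta\circ_k\Theta^\smallsmile$ has $k$ factors, so a witnessing chain has $k+1$ elements. Second, and more seriously, the argument tuple $(c_0,c_1,\dots,c_m,b)$ is the \emph{full} tuple, not one of the two ``collapsed'' tuples at which (B3) identifies $t_j$ with $t_{j+1}$. Consequently, to pass from $u_j$ to $u_{j+1}$ you must first deform the tuple to a collapsed one (moving roughly every other coordinate through its $S_q$-factorisation, in the \emph{reverse} direction), switch terms there by (B3), and then deform back; that costs two $\Theta$-type sweeps per index, not one, so you end up with about $2(k+1)(\ell+1)$ factors rather than $k(\ell+1)$. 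The small-$m$ coincidences (for $m=1$ one does get $u_j=u_{j+1}$ for odd $j$, so some steps collapse) do not persist for general $m$, and even when they do the count still does not come out to $k$. The correct fix --- and the crux of the paper's proof --- is to place the chain elements themselves at the alternating collapsed tuples: for instance, for $m$ even, $e_i:=t_i(c_0,c_0,c_2,c_2,\dots,c_m,c_m)$ for $i$ even and $e_i:=t_i(c_0,c_1,c_1,c_3,\dots,c_{m-1},c_{m-1},c_m)$ for $i$ odd, for $0\le i\le k$. Then (B3) lets \emph{both} $t_i$ and $t_{i+1}$ compute $e_i$, and the transition $e_i\to e_{i+1}$ is a single coherent sweep in which \emph{every} changing coordinate moves through its $S_q$-factorisation in the same direction (forward for $i$ even, backward for $i$ odd), giving exactly one $\Theta$- or $\Theta^\smallsmile$-step. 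That produces $k+1$ elements and $k$ steps, as required.

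A secondary issue: your treatment of the tolerance case (``the construction never composes $\alpha$ with itself'') is not a proof. When $\alpha$ is only a tolerance you cannot deduce $e_i\mathrel\alpha e_{i+1}$ from $e_i\mathrel\alpha a\mathrel\alpha e_{i+1}$, and the construction does rely on exactly that chain if done naively. The paper handles this by the Cz\'edli--Horv\'ath device of nesting one application of $t_{i+1}$ inside another (the displayed computation with the two elements ``moved by $\alpha$''), which produces each successive pair directly in $\alpha S_q$ with a single use of admissibility of $\alpha$. You would need to reproduce that trick explicitly.
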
  

\begin{proof}
The equivalence of (A) and (B) is an instance of the
Pixley-Wille algorithm \cite{P,W}  (actually, it can be seen as a good exercise
to check a student's understanding of the algorithm).
We shall need here only (A) $\Rightarrow $  (B), which can be proved as follows.
Consider the free algebra $\mathbf F_{\mathcal V}(m+2)$ in $\mathcal V$ over 
$m+2$ generators $y_0, \dots, y_{m+1}$ and let $\alpha$ 
be the congruence generated by $(y_0,y_{m+1})$,
$\beta$ be the congruence generated by 
$\{ (y_0, y_1), (y_2, y_3), (y_4, y_5), \dots \}$    
and 
$ \gamma $ be the congruence generated by 
$\{ (y_1, y_2), (y_3, y_4), \dots \}$.
Thus 
$(y_0, y_{m+1}) \allowbreak \in \alpha ( \beta \circ _{m+1} \gamma ) $     
hence, by (A), 
$ (y_0, y_{m+1}) \in \alpha \beta \circ_{k+1} \alpha \gamma $.
This latter relation is witnessed by 
$k+2$ elements  of $\mathbf F_{\mathcal V}(m+2)$ 
which give rise to terms witnessing (B).

(B) $\Rightarrow $  (C) 
Let $(a,c) \in \alpha (R \circ_{m} R ^\smallsmile )$
in some algebra $\mathbf A \in \mathcal V$.
 This is witnessed by elements $a=b_0, \allowbreak  b_1, \allowbreak 
 b_2, \dots, b_{m}= c $
such that  $b_0 \mathrel R  b_1 \mathrel { R ^\smallsmile }  b_2 \mathrel R
b_3 \mathrel { R ^\smallsmile }  b_4 \dots$ \ 
Furthermore 
$a \mathrel \alpha  c$.  

First suppose that $m$ is even. 
For $0 \leq i \leq k$, we shall consider the elements 
\begin{align*} 
e_i = \ &  t_{i}(b_0, b_0,b_2, b_2, \dots, b_{m-2}, b_{m-2}, b_{m}, b_{m}) =
\\
&
t_{i+1}(b_0, b_0,b_2, b_2, \dots , b_{m-2}, b_{m-2}, b_{m}, b_{m}) 
\quad \text{ for $i$ even,}
\\
e_i = \  &  t_{i}(b_0, b_1,b_1, b_3, \dots, b_{m-3}, b_{m-1}, b_{m-1}, b_{m}) =
\\
&
t_{i+1}(b_0, b_1,b_1, b_3, \dots, b_{m-3}, b_{m-1}, b_{m-1}, b_{m}) 
\quad \text{ for $i$ odd,}
 \end{align*} 
where the identities follow from \eqref{aa}.
In writing the above formula we are supposing that $m$
is large enough; otherwise, say, for $m=2$ 
and $i$ odd, $e_i$ should be set equal to $t_i(b_0, b_1, b_1, b_2)$.   

If we  had to show only
$\alpha (R \circ_{m} R ^\smallsmile ) \subseteq 
\alpha R \circ_{k} \alpha  R ^\smallsmile $,
for $\alpha$ a congruence,
it would be enough to consider the above elements, since, 
$e_0=b_0= a$ by \eqref{buh},  
$e_{k} = b_{m} = c$ by \eqref{uff}  and, 
say, for $i$ even,
$e_i  \allowbreak = \allowbreak 
t_{i+1}(b_0, b_0,b_2, b_2, \dots , \allowbreak b_{m-2},
\allowbreak  b_{m-2}, \allowbreak b_{m}, b_{m}) 
\mathrel{ R}
t_{i+1}(b_0, b_1,b_1, b_3, \dots, \allowbreak b_{m-3}, 
\allowbreak b_{m-1}, \allowbreak  b_{m-1}, b_{m})
=
e_{i+1}
$, since $R$ is admissible. 
Similarly 
$e_i \mathrel { R ^\smallsmile } e _{i+1}  $,
for $i$ odd. 
Notice that 
$b_h \mathrel R b _{h+1} $ for $h$ even
and 
$b_h \mathrel R ^\smallsmile b _{h+1} $ for $h$ odd,
hence 
$b_{h+1} \mathrel R ^\smallsmile  b _{h} $ for $h$ even
and 
$b_{h+1} \mathrel R  b _{h} $ for $h$ odd.
Moreover, for 
$i$ even, if $\alpha$ is a congruence, then
$e_i= 
t_{i}(a,a, \allowbreak  b_2, \dots, \allowbreak c )
\mathrel \alpha t_{i}(a,a, \allowbreak  b_2, \dots, a) =
a = t_{i+1}(a,b_1,  b_1, \dots, a ) \mathrel \alpha 
t_{i+1}(a,b_1,  b_1, \dots, c )  = e _{i+1} $, by
\eqref{buhh},  hence 
$e_i \mathrel \alpha  e _{i+1} $. 
Similarly, 
$e_i \mathrel \alpha  e _{i+1} $, for 
$i$ odd.
Hence the elements
$e_i$, for $0 \leq i \leq k$, witness  
$(a,c) \in \alpha R \circ_{k} \alpha  R ^\smallsmile $. 
However, as we mentioned, the identity 
$\alpha (R \circ_{m} R ^\smallsmile ) \subseteq 
\alpha R \circ_{k} \alpha  R ^\smallsmile $
is too
weak to chain back to the other conditions.

Hence we need to use the assumption
 $R= S_0 \circ S_1 \circ  \dots  \circ S_ \ell$
to prove the stronger
conclusion
$\alpha (R \circ_{m} R ^\smallsmile ) \subseteq 
\Theta \circ_ {k} \Theta ^\smallsmile $.
Moreover, we shall also 
extend some ideas from
Cz\'edli and  Horv\'ath \cite{CH} in order to 
prove the case of  (C) in which $\alpha$ 
is only assumed to be  a tolerance.

By the assumption
 $R= S_0 \circ S_1 \circ  \dots  \circ S_ \ell$
and given the elements 
$b_0, b_1, b_2, \dots, \allowbreak  b_{m}$
introduced at the beginning, we have that,
for every 
$h$ with $0 \leq h < m$,
there are elements 
$b _{h, 0},b _{h, 1}, \dots , b _{h,   \ell+1} $
such that    
$b_h = b _{h, 0} \mathrel {S_0 } b _{h, 1}
\mathrel {S_1 } \dots \mathrel {S_\ell } b _{h,   \ell+1}= b _{h+1}   $,
for $h$ even, and  
$b _{h+1}   = b _{h, 0} \mathrel {S_0 } b _{h, 1}
\mathrel {S_1 } \dots \mathrel {S_\ell } b _{h,   \ell+1}= b _{h}   $,
 for $h$ odd, since in this latter case 
$b_h \mathrel {R ^\smallsmile } b_{h+1}$,
that is,  
 $ b_{h+1} \mathrel R b_h$. 
Hence, with the $e_i$'s  defined as in the above-displayed formula,
 we have, for $i$ even,  
\begin{multline*} 
e_i =
t_{i+1}(b_0, b_0,b_2,  \dots ,  b_{m-2}, b_{m}, b_{m}) 
=
\\
t_{i+1}(b_0, b_{0,0},b_{1,0},  \dots ,  b_{m-2,0}, b_{m-1,0}, b_{m}) 
\mathrel {S_0}
\\
t_{i+1}(b_0, b_{0,1},b_{1,1},  \dots ,  b_{m-2,1}, b_{m-1,1}, b_{m})  
\mathrel {S_1}
\\
t_{i+1}(b_0, b_{0,2},b_{1,2},  \dots ,  b_{m-2,2}, b_{m-1,2}, b_{m})  
\mathrel {S_2} \dots 
\\
\mathrel {S_ \ell}
t_{i+1}(b_0, b_{0,\ell+1},b_{1,\ell+1},  \dots ,
  b_{m-2,\ell+1}, b_{m-1,\ell+1}, b_{m})  =
\\
t_{i+1}(b_0, b_1,b_1,  \dots ,  b_{m-1}, b_{m-1}, b_{m})  =
e _{i+1} 
\end{multline*}   
 Moreover, for $0 \leq q \leq \ell $ and $0 \leq i \leq k$, we have, 
by \eqref{buhh}: 
\begin{multline*} 
t_{i+1}(a, \allowbreak  b _{0, q}, \allowbreak  b _{1, q}, \allowbreak 
 \dots,c ) 
= \\
t_{i+1}(t_{i+1}(a, \allowbreak  b _{0, q}, \allowbreak  b _{1, q}, \allowbreak 
 \dots,\bm c) , b _{0, q+1}, \allowbreak  b _{1, q+1}, \allowbreak 
 \dots,t_{i+1}(\bmw a, \allowbreak  b _{0, q}, \allowbreak  b _{1, q},  \allowbreak 
 \dots,c ) )
\mathrel \alpha  \\
t_{i+1}(t_{i+1}(a, \allowbreak  b _{0, q}, \allowbreak  b _{1, q}, \allowbreak 
 \dots, \bm a) , b _{0, q+1}, \allowbreak  b _{1, q+1}, \allowbreak 
 \dots,t_{i+1}(\bmw c, \allowbreak  b _{0, q}, \allowbreak  b _{1, q},  \allowbreak 
 \dots,c ) )
=\\
t_{i+1}(a , b _{0, q+1}, \allowbreak  b _{1, q+1}, \allowbreak 
 \dots,c )
\end{multline*}
(elements in bold are those moved by $\alpha$). Hence, for $i$ even,  the elements
$ t_{i+1}(a, b _{0, q}, \allowbreak  b _{1, q},
\allowbreak  b _{2, q}, \allowbreak 
 \dots,c) $, for $0 \leq q \leq \ell $,
witness 
$e_i \mathrel { \Theta } e _{i+1}  $,
recalling that
 $\Theta = \alpha S_0 \circ \alpha S_1 \circ  \dots  \circ \alpha S_ \ell$ 
and that $a=b_0$, $b_m=c$.  
Similarly 
$e_i \mathrel { \Theta  ^\smallsmile } e _{i+1}  $,
for $i$ odd.
After the above considerations, we see that 
the elements 
$e_i$, for $0 \leq i \leq k$,
witness  
$(a,c) \in \alpha \Theta  \circ_{k} \alpha  \Theta  ^\smallsmile $,
what we had to show.

The case $m$ odd is similar. This time, the $e_i$'s are defined as follows, again for
 $0 \leq i \leq k$.
\begin{align*} \labbel{eodd}  
e_i & =  t_{i}(b_0, b_0,b_2, b_2, \dots, b_{m-3},b_{m-1}, b_{m-1}, b_{m}) 
&& \text{ for $i$ even,}
\\
e_i &=   t_{i}(b_0, b_1,b_1, b_3, \dots, b_{m-2}, b_{m-2}, b_{m}, b_{m}) 
 && \text{ for $i$ odd}.
 \end{align*} 
The rest is similar.

(C) $\Rightarrow $  (A)
Take 
$\ell=1$,
$S_0= \beta $ and 
 $S_1= \gamma  $
 in (C).
Apparently,
computing 
$R \circ_{m} R ^\smallsmile 
=
( \beta \circ \gamma ) \circ _m (\gamma \circ \beta )$
gives
$2m$ factors,
but we have 
$m-1$ adjacent pairs of  
 the same congruence, hence 
$R \circ_{m} R ^\smallsmile = \beta  \circ _{m+1} \gamma  $.
Similarly,
$\Theta \circ_ {k} \Theta ^\smallsmile 
=
\alpha \beta \circ _{k+1} \alpha \gamma $
and we get (A).
 \end{proof}

\begin{corollary} \labbel{ell}
If
$  J _{ \mathcal V}  (m) = k$ 
and $\ell> 0$, then 
$ J _{ \mathcal V} (m \ell) \leq k \ell$. 
 \end{corollary}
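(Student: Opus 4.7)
The plan is to derive $ J _{ \mathcal V}(m \ell )\leq k \ell$ directly from the relational reformulation in Theorem \ref{smile}, using the equivalence (A) $\Leftrightarrow $ (C). The hypothesis $ J _{ \mathcal V}(m)=k$ yields condition (C) at the parameters $m$ and $k$, and the goal is to specialize (C) --- choosing the ``$\ell$'' appearing in (C) to be the $\ell$ of the present corollary --- so that its conclusion becomes exactly the congruence inclusion $(m\ell +1,\, k\ell +1)$-dist, which, by the very definition of the J{\'o}nsson function, is the statement $ J _{ \mathcal V}(m \ell )\leq k \ell$.

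The specific specialization I would use is to let $S_0, S_1, \dots, S_ \ell $ alternate between $\beta$ and $\gamma $, so that $R=S_0 \circ \dots \circ S_ \ell = \beta \circ _{\ell +1} \gamma $; this is legitimate because congruences are reflexive and admissible. The key computation is then the bookkeeping for $R \circ _m R ^\smallsmile $: since each $S_i$ is a congruence (hence transitive and symmetric), every junction between a copy of $R$ and a copy of $R ^\smallsmile $ collapses two adjacent equal congruences into one. Starting from $m(\ell +1)$ listed factors, the $m-1$ such junctions leave $m(\ell +1)-(m-1)=m \ell +1$ alternating factors, so $R \circ _m R ^\smallsmile = \beta \circ _{m\ell +1}\gamma $. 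This is the natural generalization of the $\ell =1$ calculation carried out at the very end of the proof of Theorem \ref{smile}.

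The same arithmetic, applied to $\Theta = \alpha \beta \circ \alpha \gamma \circ \dots$ (of length $\ell +1$, and whose entries $\alpha \beta $ and $\alpha \gamma $ are again congruences), gives $\Theta \circ _k \Theta ^\smallsmile = \alpha \beta \circ _{k \ell +1} \alpha \gamma $. Condition (C1) of Theorem \ref{smile} then delivers
\[
\alpha ( \beta \circ _{m\ell +1} \gamma ) \subseteq \alpha \beta \circ _{k \ell +1} \alpha \gamma ,
\]
which is precisely $(m\ell +1,\, k\ell +1)$-dist. I do not foresee any genuine obstacle: beyond invoking Theorem \ref{smile} the only content is the factor-count arithmetic, which is a routine extension of the $\ell =1$ computation already performed. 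The conceptual point is that the factorization hypothesis $R=S_0 \circ \dots \circ S_ \ell $ built into (C) is exactly what lets a single application of the identity at level $m$ produce an identity at level $m \ell $, something that would look much less transparent if one tried to work from (A) or from J{\'o}nsson terms (B) directly.
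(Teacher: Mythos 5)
Your proposal is correct and is essentially the paper's own proof: the paper likewise applies Theorem \ref{smile}(A)\,$\Rightarrow$\,(C) with $S_0,S_1,\dots,S_\ell$ alternating between $\beta$ and $\gamma$, and performs the same junction-absorption count $m(\ell+1)-(m-1)=m\ell+1$ (and its analogue for $\Theta$) to arrive at $(m\ell+1,\,k\ell+1)$-dist.
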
 

\begin{proof} 
We assume $  J _{ \mathcal V}  (m) = k$ and we  have to show that 
$ \alpha ( \beta \circ _{m \ell +1 } \gamma )
 \subseteq \alpha \beta \circ_{k \ell +1} \alpha \gamma $. 
We  apply  Theorem \ref{smile}(A) $\Rightarrow $  (C),
taking $S_0=S_2= \dots = \beta $
and 
 $S_1=S_3= \dots = \gamma  $.
We have $R \circ_{m} R ^\smallsmile
= \beta \circ _{m \ell+1} \gamma $,
since, as in the proof of (B) $\Rightarrow $  (C) above, we apparently have
$ m(\ell+1)$ factors, but $m-1$ many of them 
are absorbed, hence we end up with 
 $ m(\ell+1) - (m-1) =  m \ell  +1 $
factors.
The definition of $\Theta$ in (C) becomes
$\Theta= \alpha \beta \circ _{\ell+1} \alpha \gamma  $,
hence, arguing as above,  
$\Theta \circ_k \Theta ^\smallsmile $ 
has $ k\ell   +1 $ actual
factors, that is, 
$\Theta \circ \Theta ^\smallsmile =
\alpha \beta \circ _{k\ell   +1} \alpha \gamma  $. 
The inclusion \eqref{uuu}  thus gives the corollary.
\end{proof}

Stronger results can be proved in the case of 
$3$-distributivity.
Actually, the arguments work 
in a  more general setting.
In the following theorem we do not need the assumption that 
$\mathcal V$ is congruence distributive.

\begin{theorem} \labbel{4gt}
Suppose that $\mathcal V$ satisfies the congruence identity
\begin{equation}\labbel{4}     
\alpha( \beta \circ \gamma ) \subseteq
 \alpha (\gamma  \circ \beta  ) \circ \alpha \gamma  
  \end{equation}
(the above identity 
is equivalent  to the existence of $3$ Gumm terms
$p, j_1, j_2$ in the terminology of Section \ref{msp} below).
Then $\mathcal V$ satisfies
\begin{gather} \labbel{4a}
\alpha ( \beta  \circ \gamma  ) \circ \alpha \beta 
=
\alpha \beta  \circ \alpha ( \gamma \circ \beta ) 
\\
\labbel{4b}
 \alpha( \beta \circ \gamma \circ \beta ) \circ \alpha \gamma    =
  \alpha ( \beta \circ \gamma )    \circ \alpha \beta  \circ \alpha \gamma  
\\
\labbel{4c}
 \alpha( \beta \circ _{m+2}  \gamma  )   =
  \alpha ( \beta \circ \gamma )    \circ (\alpha \beta  \circ_m \alpha \gamma), \quad
\text{ for } m \geq 2.   
 \end{gather}   
 \end{theorem}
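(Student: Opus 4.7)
First, I would exploit the symmetries of the hypothesis. Since $\alpha,\beta,\gamma$ are congruences (hence self-converse) and $(\beta\circ\gamma)^\smile=\gamma\circ\beta$, taking converses in \eqref{4} yields $\alpha(\gamma\circ\beta)\subseteq\alpha\gamma\circ\alpha(\beta\circ\gamma)$, and renaming $\beta\leftrightarrow\gamma$ in these inclusions further produces $\alpha(\gamma\circ\beta)\subseteq\alpha(\beta\circ\gamma)\circ\alpha\beta$ and $\alpha(\beta\circ\gamma)\subseteq\alpha\beta\circ\alpha(\gamma\circ\beta)$. These three companions, combined with the trivial absorption identities $\alpha\beta\circ\alpha(\beta\circ\gamma)=\alpha(\beta\circ\gamma)=\alpha(\beta\circ\gamma)\circ\alpha\gamma$ (and their $\beta\leftrightarrow\gamma$ analogues, all immediate consequences of $\beta\circ\beta=\beta$, $\gamma\circ\gamma=\gamma$, and transitivity of $\alpha$), will be the basic tools.

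For \eqref{4a}, each inclusion is a single application of a companion followed by one absorption. For example, $\alpha(\beta\circ\gamma)\circ\alpha\beta\subseteq\alpha\beta\circ\alpha(\gamma\circ\beta)\circ\alpha\beta$ by the companion, and then the trailing $\alpha\beta$ is absorbed into $\alpha(\gamma\circ\beta)$ using $\beta\circ\beta=\beta$. The reverse inclusion is symmetric.

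For \eqref{4b}, the $\supseteq$ direction is immediate absorption. The $\subseteq$ direction is the main obstacle: I plan to invoke a reflexive--admissible refinement of the hypothesis, available because \eqref{4} is equivalent, via Pixley--Wille, to the existence of three Gumm terms, and those terms process reflexive admissible relations uniformly. The needed refinement, in the swapped form, reads $\alpha(R\circ\beta)\subseteq\alpha(\beta\circ R)\circ\alpha\beta$ for any reflexive admissible $R$. Applied with $R=\beta\circ\gamma$ (reflexive admissible because $\beta,\gamma$ are) and absorbing $\beta\circ\beta=\beta$ on the right, it gives $\alpha(\beta\circ\gamma\circ\beta)\subseteq\alpha(\beta\circ\gamma)\circ\alpha\beta$; composing with the trailing $\alpha\gamma$ yields \eqref{4b}.

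Finally, \eqref{4c} goes by induction on $m\geq 2$, with $\supseteq$ routine throughout. The base case $m=2$ combines \eqref{4b} with the same refinement applied now to $R=\beta\circ_3\gamma$, so that the extra $\alpha\gamma$ factor can be absorbed consistently. The inductive step $m\to m+1$ peels off the newly added end factor of $\beta\circ_{m+3}\gamma$, using the relevant reflexive--admissible refinement together with \eqref{4a} (or its $\beta\leftrightarrow\gamma$ swap) to commute that factor into position and reduce to the inductive hypothesis. The argument must branch on the parity of $m+1$ to select the correct variant of \eqref{4a}, which is the main piece of bookkeeping, but no conceptually new ingredient beyond the previous steps is needed.
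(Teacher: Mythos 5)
Your treatment of \eqref{4a} is fine and is essentially the paper's argument (one application of the swapped hypothesis followed by the absorption $\alpha(\gamma\circ\beta)\circ\alpha\beta=\alpha(\gamma\circ\beta)$, then symmetry). The gap is in \eqref{4b} and \eqref{4c}, both of which you base on the ``reflexive--admissible refinement'' $\alpha(R\circ\beta)\subseteq\alpha(\beta\circ R)\circ\alpha\beta$ for an arbitrary reflexive admissible $R$, justified only by the remark that the Gumm terms obtained from Pixley--Wille ``process reflexive admissible relations uniformly.'' That general principle is precisely the trap the paper warns against in the paragraph before Theorem \ref{smile}: a congruence identity need not lift verbatim to reflexive admissible relations, and one must check which occurrences of the congruences can actually be replaced.

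If you try to prove your refinement with the terms $p,j$ satisfying $x=p(x,y,y)$, $p(x,x,y)=j(x,x,y)$, $j(x,y,y)=y$, $x=j(x,y,x)$, you will find that the derivation of the original congruence identity uses the \emph{symmetry} of the congruence you are replacing. What actually lifts is (with $\beta$ still a congruence and $R$ reflexive admissible)
\[
\alpha(\beta\circ R)\subseteq\alpha(R\circ\beta)\circ\alpha R ,
\]
because the step $p(a,b,c)\mathrel{\beta}p(a,a,c)$ uses $(b,a)\in\beta$, i.e.\ symmetry of $\beta$, while the relation $R$ only ever has to move arguments in the given direction. Your inclusion has $\alpha\beta$ rather than $\alpha R$ in the trailing position, and I see no way to obtain it; in particular, with $R=\beta\circ\gamma$ your claim would give $\alpha(\beta\circ\gamma\circ\beta)\subseteq\alpha(\beta\circ\gamma)\circ\alpha\beta$, a strictly stronger statement than anything the paper establishes, and one that does not follow from the available lift (which only yields $\alpha(\beta\circ\gamma\circ\beta)\subseteq\alpha(\gamma\circ\beta)\circ\alpha(\gamma\circ\beta)$). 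The paper instead proves \eqref{4b} by a direct elementwise chain that feeds \emph{both} endpoints $a,d$ back into $j$ so as to exploit $a\mathrel\alpha d$ (namely $a=j(d,a,a)\mathrel\beta j(d,b,a)\mathrel\gamma j(d,c,a)\mathrel\beta j(c,c,b)=p(c,c,b)\mathrel\beta p(d,c,b)\mathrel\gamma d$) and then rearranges with \eqref{4a}; and for \eqref{4c} it isolates a \emph{constrained} relation identity, $\alpha(T\circ T^\smallsmile\circ R\circ S)=\alpha(T\circ T^\smallsmile)\circ\alpha R\circ\alpha S$ under the side conditions $R\subseteq T$ and $S\subseteq T^\smallsmile$, which are exactly what makes the term computation close. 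Since your induction for \eqref{4c} is built on the same unproved refinement (and is only sketched), it inherits the same gap. You would need either to prove your refinement — which I do not believe follows from the hypothesis — or to replace it by the kind of constrained relation identity, or direct element chase, that the paper uses.
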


Notice that the identity
\eqref{4} is weaker than  $3$-distributivity, which
corresponds to the identity    
 $\alpha( \gamma   \circ \beta ) \subseteq 
\alpha \gamma \circ \alpha  \beta \circ \alpha \gamma  $, equivalently,
 taking converse,
$\alpha( \beta \circ \gamma ) \subseteq
 \alpha \gamma  \circ \alpha \beta   \circ \alpha \gamma$.

\begin{proof}
By applying  \eqref{4}
with $\beta$ and $\gamma$ exchanged, we get
$\alpha \beta  \circ \alpha ( \gamma \circ \beta ) \subseteq
\alpha \beta  \circ \alpha ( \beta \circ \gamma  ) \circ \alpha \beta =
\alpha ( \beta \circ \gamma  ) \circ \alpha \beta $,
since obviously
 $\alpha \beta  \circ \alpha ( \beta \circ \gamma  )=
 \alpha ( \beta \circ \gamma  )$. 
The reverse inclusion in \eqref{4a}  follows by symmetry.

It is easy and standard to show that
if $\mathcal V$ satisfies the congruence identity 
\eqref{4} then  
 $\mathcal V$ 
has terms $p$ and  $j$ such that the identities 
$x=p(x,y,y)$, $p(x,x,y)=j(x,x,y)$, $j(x,y,y)=y$
and $x=j(x,y,x)$ 
hold in $\mathcal V$.
Were $\mathcal V$  in addition
$3$-distributive, we 
would also have the identity  $x=p(x,y,x)$.  
Cf. the J{\'o}nsson terms which shall be recalled
at the beginning of the next section. 
However, here the identity 
 $x=p(x,y,x)$ shall not be needed.

We now  prove \eqref{4b}.
One inclusion is trivial and, in order to prove the nontrivial 
inclusion, it is enough to prove
$\alpha( \beta \circ \gamma \circ \beta )  \subseteq 
\alpha ( \beta \circ \gamma ) \circ \alpha \beta \circ \alpha \gamma    $,
since $\alpha \gamma $ is a congruence, hence transitive.
So let $(a,d) \in \alpha( \beta \circ \gamma \circ \beta ) $,
hence $a \mathrel \alpha  d$ and $a \mathrel { \beta } b \mathrel { \gamma  }
c \mathrel { \beta } d   $, for some $b$ and  $c$.
Let us compute  
$a=j(d, a, a) \mathrel {\beta  } j(d, b, a)
\mathrel { \gamma } j(d, c, a)
\mathrel \beta 
j(c, c, b) = p(c, c, b) \mathrel { \beta  }  p(d, c, b)
\mathrel {\gamma  } p(d,c, c)= d$.
Moreover,
$a = j(a, b, a) 
\mathrel \alpha  
j(d, b, a)$,
hence 
$ a \mathrel {  \alpha  \beta } j(d,b,a) $. 
Furthermore, 
$j(d, b, a) 
\mathrel \alpha  a=
j(a, c, a) \mathrel \alpha  j(d,c,a) $
hence also
$j(d, b, a) 
\mathrel { \alpha \gamma }   j(d,c,a) $.
Finally,
$j(d,c,a) \mathrel \alpha  a \mathrel \alpha  d$,
hence 
 $j(d,c,a) \mathrel { \alpha ( \beta \circ \gamma )}   d$.
Hence the elements 
$ j(d, b, a) $ and $ j(d, c, a)$
witness 
$(a,d) \in 
 \alpha \beta  \circ \alpha \gamma \circ \alpha (  \beta \circ \gamma  )$.  
By applying \eqref{4a}  twice, we get
$ \alpha \beta  \circ \alpha \gamma \circ \alpha (  \beta \circ \gamma  ) =
 \alpha \beta  \circ \alpha (\gamma \circ   \beta) \circ \alpha \gamma  =
 \alpha (  \beta \circ \gamma  ) \circ \alpha \beta  \circ \alpha \gamma $. 
In conclusion, 
$(a,d) \in  \alpha (  \beta \circ \gamma  ) \circ \alpha \beta  \circ \alpha \gamma $
and \eqref{4b} is proved. 

In order to prove \eqref{4c} we need a claim.

\begin{claim}  
If
$\mathbf A \in \mathcal V$, $\alpha$ is a congruence,
$R$, $S$, $T$  are reflexive and admissible relations on $\mathbf A$, 
$R \subseteq T$ and $S \subseteq T ^\smallsmile $, then
\begin{equation}\labbel{ru}  
\alpha (T \circ T ^\smallsmile  \circ R \circ S ) =
\alpha (T \circ T ^\smallsmile ) \circ \alpha R \circ \alpha S 
 \end{equation}     
 \end{claim}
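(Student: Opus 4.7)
The plan is to prove the easy inclusion $\supseteq$ by transitivity and reflexivity, then construct explicit witnesses for $\subseteq$ using the Gumm-style terms $p$ and $j$ recalled just above. First I would observe that the hypotheses $R \subseteq T$ and $S \subseteq T^\smallsmile$ yield the derived $T$-edges $c \mathrel T d$ (from $c \mathrel R d$) and $e \mathrel T d$ (from $d \mathrel S e$); these will power essentially every $T$-admissibility step. The inclusion $\supseteq$ is then immediate, since each $\alpha X$-factor is contained in $X$ and $\alpha$ is transitive.

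For $\subseteq$, suppose $a \mathrel \alpha e$ and $a \mathrel T b \mathrel{T^\smallsmile} c \mathrel R d \mathrel S e$; I would peel off the rightmost factor by setting $v := j(a, d, e)$. The point of this choice is that both collapse identities apply: $S$-admissibility with $d \mathrel S e$ gives $v \mathrel S j(a, e, e) = e$ via $j(x, y, y) = y$, and $\alpha$-admissibility with $a \mathrel \alpha e$ gives $v \mathrel \alpha j(e, d, e) = e$ via $j(x, y, x) = x$, so $(v, e) \in \alpha S$. To place $(a, v)$ in $\alpha(T \circ T^\smallsmile \circ R)$, I would take $w_2 := j(a, c, e)$ (giving $w_2 \mathrel R v$ by $R$-admissibility from $c \mathrel R d$) and $w_1 := p(b, b, d)$, which equals $j(b, b, d)$ by $p(x, x, y) = j(x, x, y)$. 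Then $a = p(a, c, c) \mathrel T w_1$ uses $T$-admissibility from $a \mathrel T b$, $c \mathrel T b$, $c \mathrel T d$, while $w_2 \mathrel T w_1$ uses $a \mathrel T b$, $c \mathrel T b$, $e \mathrel T d$, and $v \mathrel \alpha a$ follows from $j(a, d, a) = a$.

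Next I would iterate once more on the chain $a \mathrel T w_1 \mathrel{T^\smallsmile} w_2 \mathrel R v$ by setting $u := j(a, w_2, v)$. Applying $j(x, y, y) = y$ via $R$-admissibility gives $u \mathrel R j(a, v, v) = v$, and $j(x, y, x) = x$ via $\alpha$-admissibility (using $v \mathrel \alpha a$) gives $u \mathrel \alpha j(a, w_2, a) = a$, so $(u, v) \in \alpha R$. For $(a, u) \in \alpha(T \circ T^\smallsmile)$ I would take the $\tau$-witness $w' := p(w_1, w_1, v) = j(w_1, w_1, v)$: then $a = p(a, w_2, w_2) \mathrel T w'$ by $T$-admissibility from $a \mathrel T w_1$, $w_2 \mathrel T w_1$, $w_2 \mathrel T v$ (the last needing $c \mathrel T d$, via $j(a, c, e) \mathrel T j(a, d, e)$), and $u \mathrel T w'$ follows directly by $j$-admissibility from the same edges. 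Composing the three links gives $(a, u) \in \alpha(T \circ T^\smallsmile)$, $(u, v) \in \alpha R$, $(v, e) \in \alpha S$, completing the proof.

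The hard part is the choice of $v$. The naive pair $(u, v) = (j(e, c, a), j(e, d, a))$, in the spirit of the witnesses used for \eqref{4b}, handles the first $\alpha(T \circ T^\smallsmile)$-step easily (with $b$ as the $T$-zig witness) but fails the $\alpha S$-step, since $j(e, e, a)$ does not simplify to $e$. Shifting to $v = j(a, d, e)$ arranges things so that both $j(x, y, y) = y$ and $j(x, y, x) = x$ apply at the final step; this shift then propagates consistently through $w_2, w_1, w', u$, and the subrelation hypotheses $R \subseteq T$, $S \subseteq T^\smallsmile$ are indispensable for supplying the $T$-edges $c \mathrel T d$ and $e \mathrel T d$ that close the $\tau$-zigzags at $w_1$ and $w'$.
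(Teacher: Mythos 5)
Your proof is correct, and it is built from the same ingredients as the paper's: the Gumm-style terms $p$ and $j$ with their identities $x=p(x,y,y)$, $p(x,x,y)=j(x,x,y)$, $j(x,y,y)=y$, $x=j(x,y,x)$, and the core witnesses $w_2 = j(a,c,e)$ and $v = j(a,d,e)$. Where you differ is that the paper's proof is one pass: it observes that $w_2 = j(a,c,e) \mathrel\alpha j(a,c,a) = a$ (by $\alpha$-admissibility from $e \mathrel\alpha a$ and the identity $x=j(x,y,x)$), so that the chain $a = p(a,c,c) \mathrel T p(b,b,d)=j(b,b,d) \mathrel{T^\smallsmile} j(a,c,e)$ together with $a \mathrel\alpha w_2$ already gives $(a,w_2)\in\alpha(T\circ T^\smallsmile)$; then $w_2 \mathrel{\alpha R} v$ and $v \mathrel{\alpha S} e$ follow exactly as you argue, and one is done with only the two witnesses $w_2,v$. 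You establish $v\mathrel\alpha a$ but not $w_2\mathrel\alpha a$, so you then add a second ``peeling'' round, introducing $u=j(a,w_2,v)$ and $w'=j(w_1,w_1,v)$, effectively reproving the easier inclusion $\alpha(T\circ T^\smallsmile\circ R)\subseteq \alpha(T\circ T^\smallsmile)\circ\alpha R$ under the subrelation hypotheses. That extra iteration is sound, and it is a nice illustration of how the same trick can be reapplied, but it is unnecessary: the same term identity that gives $v\mathrel\alpha a$ gives $w_2\mathrel\alpha a$ for free, shortening the proof to the paper's version.
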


To prove the claim, first notice that an inclusion is trivial,
since $\alpha$ is assumed to be a congruence.
To prove the nontrivial inclusion, 
let $(a,e) \in \alpha(T \circ T ^\smallsmile  \circ R \circ S ) $,
with  $a \mathrel \alpha  e$ and $a \mathrel { T} b \mathrel {T ^\smallsmile }
c \mathrel { R } d  \mathrel {S  } e  $.
We have
$a=p(a, c, c) \mathrel { T } p(b, b, d) =j(b,b,d)
\mathrel { T ^\smallsmile } j(a, c, e) $,
since 
$R \subseteq T$ and $S \subseteq T ^\smallsmile $.
Moreover, 
$a=j(a,c,a) \mathrel \alpha  j(a,c,e)$,
hence
 $a \mathrel {\alpha (T \circ T ^\smallsmile )}  j(a,c,e)$.
Furthermore,
$ j(a,c,e) \mathrel R j(a,d,e) \mathrel { S}  j(a,e,e) =e$
and 
 $ j(a,c,e)  \mathrel \alpha a \mathrel \alpha  e =  j(e,d,e) \mathrel {  \alpha }  j(a,d,e) $, hence
$ j(a,c,e) \mathrel { \alpha R} j(a,d,e)$
and $ j(a,d,e) \mathrel { \alpha  S}  =e$,
thus the elements 
$ j(a, c, e) $ and $ j(a, d, e)$
witness 
$(a,e) \in \alpha (T \circ T ^\smallsmile ) \circ \alpha R \circ \alpha S$
and the claim is proved.  

Having proved the claim,
we go on by proving the case $m=2$ of equation \eqref{4c}. 
By taking
 $T =  \beta \circ \gamma  $,
$R= 0 $ and
 $S= \gamma $
 in equation \eqref{ru},
we get 
$ \alpha( \beta \circ \gamma \circ \beta \circ \gamma  )=
  \alpha( \beta \circ \gamma \circ \gamma \circ \beta  \circ \gamma  )=
 \alpha( T \circ T ^\smallsmile  \circ R \circ S  )
= 
\alpha (T \circ T ^\smallsmile  ) \circ \alpha R \circ \alpha S=
\alpha ( \beta \circ \gamma \circ \gamma \circ \beta   )
 \circ \alpha  \gamma =
\alpha ( \beta \circ \gamma \circ \beta   )
\circ \alpha  \gamma =
\alpha (\beta \circ   \gamma ) \circ \alpha \beta \circ \alpha \gamma  $, 
where in the last identity  we have used  \eqref{4b}.

The rest of the proof now follows 
quite easily by induction.
Suppose that $m \geq 3$ and that  we have proved 
equation \eqref{4c} for every $n$ with  
 $2 \leq n < m $. 
If $m$ is odd, take 
$T = \beta  \circ _{ \frac{m+1}{2}}  \gamma  $,
 $R= \gamma $ and 
 $S= \beta  $ in \eqref{ru}, 
getting
$ \alpha ( \beta \circ _{m+2} \gamma  ) =
\alpha (T \circ T ^\smallsmile \circ R \circ S  ) =
 \alpha (T \circ T ^\smallsmile ) \circ \alpha R \circ \alpha S=
 \alpha ( \beta \circ _{m} \gamma  ) \circ \alpha \gamma \circ \alpha \beta =
 \alpha (\beta  \circ \gamma ) \circ ( \alpha \beta \circ  _{m} \alpha \gamma ) $,
where the  last identity follows from the inductive hypothesis,
except in case $m=3$, where we need \eqref{4b}. 
If $m$ is even, take 
$T = \beta  \circ _{ \frac{m+2}{2}}  \gamma  $,
 $R= 0 $ and 
 $S= \gamma  $ in \eqref{ru}, 
getting
$ \alpha ( \beta \circ _{m+2} \gamma  ) =
\alpha (T \circ T ^\smallsmile \circ R \circ S  ) =
 \alpha (T \circ T ^\smallsmile ) \circ \alpha R \circ \alpha S=
 \alpha ( \beta \circ _{m+1} \gamma  ) \circ \alpha \gamma =
 \alpha (\beta  \circ \gamma ) \circ (\alpha \beta \circ  _{m} \alpha \gamma ) $,
where we have used the inductive hypothesis again to obtain the last identity.
 \end{proof}    

\begin{corollary} \labbel{th3d}
If $ J _{ \mathcal V} (1) = 2 $,
that is, $\mathcal V$ is $3$-distributive, 
then
$ J _{ \mathcal V} (n) \leq  n$,
for every $n \geq 3$.
Moreover the following congruence identity holds in $\mathcal V$ 
\begin{equation*}
     \alpha( \beta \circ \gamma \circ \beta ) \circ \alpha \gamma    =
\alpha \beta \circ \alpha \gamma \circ  \alpha \beta \circ \alpha \gamma    
  \end{equation*}
 \end{corollary}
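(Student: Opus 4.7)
The plan is to first check that $3$-distributivity implies the congruence identity \eqref{4}, then apply Theorem \ref{4gt} to harvest \eqref{4b} and \eqref{4c}, and finally combine these with the $3$-distributivity inclusion $\alpha ( \beta \circ \gamma ) \subseteq \alpha \beta \circ \alpha \gamma \circ \alpha \beta$ to derive both statements of the corollary.

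For the preliminary step, note that $J_{\mathcal V}(1)=2$ reads $\alpha(\beta \circ \gamma ) \subseteq \alpha\beta \circ \alpha\gamma \circ \alpha\beta$. Swapping the symbols $\beta$ and $\gamma$ in this identity and then taking converses (all factors are symmetric, and $(\beta \circ \gamma)^\smallsmile = \gamma \circ \beta$) yields also $\alpha(\beta \circ \gamma) \subseteq \alpha\gamma \circ \alpha\beta \circ \alpha\gamma$. Since $\alpha\gamma \circ \alpha\beta \subseteq \alpha(\gamma \circ \beta)$, the latter absorbs to $\alpha(\beta \circ \gamma) \subseteq \alpha(\gamma \circ \beta) \circ \alpha\gamma$, which is precisely \eqref{4}. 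Hence Theorem \ref{4gt} applies, giving \eqref{4a}, \eqref{4b} and \eqref{4c} at our disposal.

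For the bound $J_{\mathcal V}(n) \leq n$ with $n \geq 3$, I would apply \eqref{4c} with $m = n-1 \geq 2$ to rewrite $\alpha(\beta \circ_{n+1} \gamma) = \alpha(\beta \circ \gamma) \circ (\alpha\beta \circ_{n-1} \alpha\gamma)$, and then substitute the $3$-distributivity inclusion for the factor $\alpha(\beta \circ \gamma)$. The trailing $\alpha\beta$ of the substituted block $\alpha\beta \circ \alpha\gamma \circ \alpha\beta$ collapses, by transitivity of the congruence $\alpha\beta$, with the leading $\alpha\beta$ of $\alpha\beta \circ_{n-1} \alpha\gamma$, so the total count of genuine factors becomes $3 + (n-1) - 1 = n+1$, yielding $\alpha\beta \circ_{n+1} \alpha\gamma$, as desired.

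Finally, for the additional identity I would use \eqref{4b} to rewrite $\alpha(\beta \circ \gamma \circ \beta) \circ \alpha\gamma = \alpha(\beta \circ \gamma) \circ \alpha\beta \circ \alpha\gamma$; substituting the $3$-distributivity bound for $\alpha(\beta \circ \gamma)$ and collapsing the two resulting adjacent $\alpha\beta$ factors by transitivity yields inclusion in $\alpha\beta \circ \alpha\gamma \circ \alpha\beta \circ \alpha\gamma$, while the reverse inclusion is immediate from $\alpha\beta \circ \alpha\gamma \circ \alpha\beta \subseteq \alpha(\beta \circ \gamma \circ \beta)$. The only subtle point in the whole argument is the very first one, verifying \eqref{4} under $3$-distributivity, where one might initially worry about having to manufacture Gumm terms from J{\'o}nsson terms; the short congruence-identity chain described above sidesteps any term manipulation.
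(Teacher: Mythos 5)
Your proof is correct and follows the same route as the paper: establish that $3$-distributivity implies \eqref{4}, invoke Theorem \ref{4gt}, and then splice in the $3$-distributivity inclusion $\alpha(\beta\circ\gamma)\subseteq\alpha\beta\circ\alpha\gamma\circ\alpha\beta$ followed by absorption of adjacent $\alpha\beta$'s. The paper's own proof is terser (it derives the displayed identity in detail and then says ``all the other inclusions are similar,'' pointing to \eqref{4c} and the same absorption of a repeated $\alpha\beta$), whereas you work out the factor count for general $n$ explicitly and also spell out the derivation of \eqref{4} (converse plus $\alpha\gamma\circ\alpha\beta\subseteq\alpha(\gamma\circ\beta)$) that the paper only records as a passing remark just after Theorem \ref{4gt}; both are the same argument, with the only cosmetic difference being that you handle the $J_{\mathcal V}(n)\leq n$ bound before the displayed identity while the paper does the reverse.
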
 

\begin{proof}
Immediate from Theorem \ref{4gt}.
Indeed, as we mentioned, if $\mathcal V$ is $3$-distributive, then
the hypothesis of Theorem \ref{4gt} holds. 
Then,  using 
$ \alpha ( \beta \circ \gamma ) 
\subseteq \alpha \beta \circ \alpha \gamma \circ \alpha \beta $ in
equation \eqref{4b},
we get
 $     \alpha( \beta \circ \gamma \circ \beta ) \circ \alpha \gamma   
 =
     \alpha( \beta \circ \gamma) \circ \alpha \beta  \circ \alpha \gamma 
\subseteq  
\alpha \beta \circ  \alpha \gamma \circ \alpha \beta \circ  \alpha \beta \circ \alpha \gamma =
\alpha \beta \circ  \alpha \gamma \circ  \alpha \beta \circ \alpha \gamma $. 
The converse inclusion is trivial.
All the other inclusions are similar; as above, in any case,
a pair of $\alpha \beta $'s is absorbed into a single occurrence
(the case $n=3$ here corresponds to $m=2$ in \eqref{4c} and so on). 
 \end{proof}

\section{Variants of the spectrum}\labbel{var}

One can consider an alternative function
$ J _{ \mathcal V} ^\smallsmile  $
which is defined in such a way that 
$ J _{ \mathcal V} ^\smallsmile (m) $
is the smallest $k$ such that 
the identity  
\begin{equation}\labbel{d3b}  
\tag*{$(m+1,k+1)$-dist$ ^\smallsmile  $ }
 \alpha ( \beta \circ _{m+1} \gamma ) \subseteq 
\alpha \gamma  \circ _{k+1} \alpha \beta 
\end{equation}      
holds in $\mathcal V$.
Here $\gamma$ and $\beta$ are exchanged on the
right-hand side, in comparison with $(m+1,k+1)$-dist.
It is easy to see that $ J _{ \mathcal V} ^\smallsmile $
and $ J _{ \mathcal V} $ are different functions;
indeed,  $ J _{ \mathcal V} ^\smallsmile (m) = m$,
for some $m$, 
implies $m+1$-permutabilty, while 
$ J _{ \mathcal V} (m) = m$ holds in lattices,
for every $m$.
Obviously, however, 
$ J _{ \mathcal V} ^\smallsmile (m) $
and
$ J _{ \mathcal V}  (m) $
differ at most by $1$.
There are some further simple relations connecting 
$ J _{ \mathcal V} ^\smallsmile  $
and
$ J _{ \mathcal V}   $.
For example, if  $m$ and
$ J _{ \mathcal V}  (m) $ have the same parity, 
then $ J _{ \mathcal V} ^\smallsmile (m) \geq J _{ \mathcal V}  (m) $.
Compare the parallel discussion
(corresponding to  the case $m=1$ here) in \cite[p. 63]{FV}.  

A theorem analogous to Theorem \ref{smile}
holds for $ J _{ \mathcal V} ^\smallsmile  $:
just exchange even and odd in \eqref{aa} and replace 
identity \eqref{uuu} by   
$\alpha (R \circ_{m} R ^\smallsmile ) \subseteq 
\Theta ^\smallsmile \circ_ {k} \Theta $. 
Arguing as in Corollary \ref{ell},
we then get that 
if
$  J _{ \mathcal V} ^\smallsmile  (m) = k$ 
and $\ell$ is odd, then 
$ J _{ \mathcal V} ^\smallsmile (m \ell) \leq k \ell$. 
If $\ell$ is even, then
 $  J _{ \mathcal V} ^\smallsmile  (m) = k$  implies
$ J _{ \mathcal V}  (m \ell) \leq k \ell$.

A probably more significant variant is suggested by
the use of directed J{\'o}nsson terms.
See Z\'adori \cite{Z} and  Kazda,  Kozik,  McKenzie and Moore
\cite{kkmm}. Let us recall the definitions.
\emph{J{\'o}nsson terms} \cite{JD}  are terms 
$j_0, \dots, j_k$ satisfying 
\begin{align} \labbel{j1} 
\tag{J1}
  x&= j_0(x,y,z),  &  \\
\labbel{j2}
\tag{J2}
  x&=j_i(x,y, x), 
\quad \ \ \  \text{ for } 
0 \leq i \leq k,
 \\ 
 \labbel{j3} \tag{J3} 
\begin{split}  
 j_{i}(x,x,z) &=
j_{i+1}(x,x,z),
 \quad \text{ for even $i$,\ } 
0 \leq i < k,
  \\ 
 j_{i}(x,z,z)&=
j_{i+1}(x,z,z),
  \quad \text{ for odd $i$,\ } 
0 \leq i < k,
 \end{split}
 \\ 
\labbel{j4}
\tag{J4}
j_{k}(x,y,z)&=z  &
\end{align}   
Notice that 
this is exactly condition (B) in Theorem \ref{smile} 
in the particular case  $m=1$ and with 
$k$ in place of $k+1$.  
We get 
  \emph{directed J{\'o}nsson terms},
or \emph{Z\'adori  terms}  
\cite{Z,kkmm} 
if
in the above set of identities we replace condition
\eqref{j3} by
\begin{equation}\labbel{D}
\tag{D}
  j_{i}(x,z,z)=
j_{i+1}(x,x,z)  \quad \text{ for \ } 
0 \leq i < k
  \end{equation}    

Seemingly, directed J{\'o}nsson terms first appeared (unnamed)
in \cite[Theorem 4.1]{Z},
whose proof  relies on McKenzie \cite{Mc}.
In \cite{Z} 
 it is shown, among other,  that  a finite  bounded poset 
admits J{\'o}nsson operations for some
$k$ if and only if it admits directed J{\'o}nsson operations for some
$k'$.
Kazda,  Kozik,  McKenzie and Moore
\cite{kkmm} proved the equivalence for
terms in an arbitrary variety,
 thus, by \cite{JD}, a variety is congruence distributive if and only if
it has directed J{\'o}nsson terms.

For a binary relation $R$,
let $R^k = R \circ R \circ R \dots$
with $k$ factors. In other words,
 $R^k = R \circ_ k R $.

\begin{proposition} \labbel{kk}
If some variety $\mathcal V$ has $k+1$ 
directed J{\'o}nsson terms
 $d_0, \dots, \allowbreak  d _{k} $, with $k \geq 1$, 
then, for every $\ell \geq 1$, $\mathcal V$ satisfies the identity
\begin{equation*}\labbel{ars}
\alpha (S_1 \circ S_2 \circ \dotsc \circ S_ \ell  ) \subseteq 
(\alpha S_1 \circ \alpha S_2 \circ \dotsc \circ \alpha S_ \ell  )^{k-1}
  \end{equation*}    
where $\alpha$ varies among tolerances
and $S_1, S_2, \dots$ 
vary among reflexive and admissible relations
on some algebra in $\mathcal V$. In particular, for $\ell$ even,
 $\mathcal V$ satisfies
\begin{equation}\labbel{arss}
\alpha ( S \circ _{ \ell} T ) \subseteq
\alpha  S \circ _{ \ell (k-1)} \alpha T 
  \end{equation}    
and, for $\ell$ odd, 
\begin{equation*}\labbel{arsss}
\alpha ( \beta  \circ _{ \ell} T ) \subseteq
\alpha  \beta  \circ _{ k'} \alpha T, 
  \end{equation*}
where $\beta$ varies among congruences and $k'=\ell (k-1) -k +2$.     
\end{proposition}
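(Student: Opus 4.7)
The plan is to mimic the (B) $\Rightarrow$ (C) portion of the proof of Theorem \ref{smile}, with the single directed J{\'o}nsson identity $d_i(x,z,z) = d_{i+1}(x,x,z)$ playing the role of the two clauses of \eqref{aa}. Given $(a,c) \in \alpha(S_1 \circ \dots \circ S_\ell)$, fix witnesses $a = b_0, b_1, \dots, b_\ell = c$ with $b_{j-1} \mathrel{S_j} b_j$, and define the doubly-indexed family
\[
 e_i^{(j)} := d_i(a, b_j, c), \qquad 0 \le i \le k,\ 0 \le j \le \ell.
\]
The boundary identities $d_0(x,y,z)=x$ and $d_k(x,y,z)=z$ force $e_0^{(j)} = a$ and $e_k^{(j)} = c$ for every $j$, while the directed J{\'o}nsson identity produces the crucial gluing $e_i^{(\ell)} = d_i(a,c,c) = d_{i+1}(a,a,c) = e_{i+1}^{(0)}$ for $0 \le i < k$.

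Next I verify that $(e_i^{(j)}, e_i^{(j+1)}) \in \alpha S_{j+1}$ for every $i$ and every $0 \le j < \ell$. Admissibility of $S_{j+1}$ handles the $S_{j+1}$-component directly, since $d_i$ is applied to $a \mathrel{S_{j+1}} a$, $b_j \mathrel{S_{j+1}} b_{j+1}$, $c \mathrel{S_{j+1}} c$. For the $\alpha$-component, which must be established without transitivity since $\alpha$ is only a tolerance, I apply the Cz{\'e}dli--Horv{\'a}th trick already used in the proof of Theorem \ref{smile}: the identity $d_i(x,y,x)=x$ permits the rewritings
\[
 e_i^{(j)} = d_i\bigl(d_i(a,b_j,c),\, b_{j+1},\, d_i(a,b_j,c)\bigr), \ \
 e_i^{(j+1)} = d_i\bigl(d_i(a,b_j,a),\, b_{j+1},\, d_i(c,b_j,c)\bigr),
\]
and now admissibility of the tolerance $\alpha$, together with $a \mathrel{\alpha} c$ applied in a single coordinate of each inner $d_i$, gives $(e_i^{(j)}, e_i^{(j+1)}) \in \alpha$. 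Concatenating these $\alpha S_{j+1}$-steps for $j = 0, \dots, \ell-1$ and $i = 1, \dots, k-1$, using the gluing to join consecutive rounds and the degenerate rounds $i=0, k$ to pin down the endpoints $a$ and $c$, I obtain a chain of precisely $\ell(k-1)$ factors of the desired cyclic shape, which is the main inclusion.

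The even-$\ell$ specialisation is immediate upon setting $S_{2p-1} = S$ and $S_{2p} = T$: the pattern $\alpha S, \alpha T, \alpha S, \alpha T, \dots$ continues unchanged across every junction $e_i^{(\ell)} = e_{i+1}^{(0)}$, yielding $\alpha S \circ_{\ell(k-1)} \alpha T$ as required. The odd-$\ell$ case with $\beta$ a congruence is the one point where a small extra argument is needed: with $S_{2p-1} = \beta$ and $S_{2p} = T$, each round both starts and ends with an $\alpha\beta$-step, so each of the $k-2$ junctions between consecutive rounds produces two adjacent $\alpha\beta$-steps. I would collapse each such pair into a single $\alpha\beta$-step by proving $(e_i^{(\ell-1)}, e_{i+1}^{(1)}) \in \alpha\beta$ directly: transitivity of the congruence $\beta$ supplies the $\beta$-component via the intermediate $e_i^{(\ell)}$, while a second application of the Cz{\'e}dli--Horv{\'a}th rewriting, now using $d_{i+1}(x,y,x) = x$ on the outside and $d_i(x,y,x) = x$ inside, supplies the $\alpha$-component without appealing to transitivity of $\alpha$. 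The $k-2$ such collapses drop the total factor count from $\ell(k-1)$ to $k' = \ell(k-1) - (k-2)$, which is exactly the stated bound. This uniform handling of the junctions under only a tolerance hypothesis on $\alpha$ is the one step I expect to be slightly delicate; the rest is a fairly direct adaptation of the argument in Theorem \ref{smile}.
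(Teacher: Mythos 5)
Your proposal is correct and takes essentially the same route as the paper: the elements $e_i^{(j)} = d_i(a,b_j,c)$ and the chain built from them are precisely the paper's, and the Cz\'edli--Horv\'ath rewriting is used in the same way to handle the tolerance case. The only difference is one of economy: the paper applies the rewriting once with all four indices $i,j,h,h'$ free, showing in a single stroke that all the elements $d_i(a,b_h,c)$ are pairwise $\alpha$-related, which makes your separate second application at the round junctions in the odd-$\ell$ case unnecessary.
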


 \begin{proof} 
Let us work in some fixed algebra belonging to $\mathcal V$.
Let $(a,c) \in \alpha (S_1 \circ S_2 \circ \dots \circ S_ \ell  )$.
Thus $ a \mathrel \alpha  c$
and there are elements
$b_0, b_1, b_2, \dots, b_{\ell} $
such that  $a=b_0 \mathrel {S_1}  b_1 \mathrel {S_2}  b_2 \dots
b_{\ell-1} \mathrel {S_\ell} b_{\ell}= c$.
First suppose that $\alpha$ is a congruence.
For every $i$ and $h$,
we have  
$d_i(a,b_h,c) \mathrel \alpha d_i(a,b_h,a)= a  $,
hence all such elements are $\alpha$-related.
Moreover, for every $i < k$, 
$d_i(a,a,c) \mathrel {S_1}  d_i(a,b_1,c)
\mathrel {S_2}  d_i(a,b_2,c) \mathrel {S_3}  \dots
\mathrel {S_\ell}  d_i(a,c,c) =  d_{i+1}(a,a,c)  $.
This shows that, for every $i < k$,
$(d_i(a,a,c) ,  d_{i+1}(a,a,c) ) 
 \in \alpha S_1 \circ \alpha S_2 \circ \dots \circ \alpha S_ \ell $.

Since $a=d_0(a,c,c) = d_1(a,a,c)$
and $d _{k} (a,a,c) = c $,
then the elements   
$d _{i} (a,a,c)  $, for
$1 \leq i \leq k $, 
witness 
$(a,c) \in (\alpha S_1 \circ \alpha S_2 \circ \dots \circ \alpha S_ \ell  )^{k-1}$.

The last statement follows immediately.
We just mention that in the last equation
$k-2$ factors are absorbed, since we assume that $\beta$  
is transitive.

The case when $\alpha$ is just  a tolerance is treated as in
Cz\'edli and  Horv\'ath \cite{CH}.
Indeed, for every 
 $i,j,h,h'$,
we have  
$d_i(a,b_h,c) = \allowbreak d_i(d_j(a, \allowbreak b_{h'},a), \allowbreak b_h,d_j(c,b_{h'},c))
\allowbreak \mathrel \alpha 
d_i(d_j(a,b_{h'},c), \allowbreak b_h,d_j(a,b_{h'},c)) \allowbreak =
d_j(a,b_{h'},c) $. The rest is the same.
\end{proof}    

Since congruences and tolerances are, in particular, reflexive and admissible,
we  obtain corresponding congruence/tolerance  
identities from the above identities about relations. 
In particular, by \cite{JD,kkmm}
and Proposition \ref{kk}, 
a variety is congruence distributive if and only if 
equation \eqref{arss} holds for some 
$\ell \geq 2$ and $k$, equivalently,  
for $\ell=2$ and some $k$.
If $^*$ denotes  transitive closure,
we then get that a variety $\mathcal V$ is congruence distributive
if and only if  
$(\alpha ( S \circ  T ))^* =
(\alpha  S \circ  \alpha T )^*$
holds in $\mathcal V$, 
if and only if 
$\alpha^* ( S \circ  T )^* =
(\alpha  S \circ  \alpha T )^*$
holds in $\mathcal V$.
Notice that, on the other hand,
neither the identity 
  $(\alpha (S \circ S) )^* =
(\alpha  S  )^*$
nor
the identity
  $\alpha ^*S  ^* =
(\alpha  S  )^*$
 imply congruence distributivity, since the identities hold, 
e.~g., in permutable varieties
(in fact, we have a proof that both identities are equivalent
to congruence modularity \cite{ricm}).
See however \cite{uar} for a variation actually equivalent 
to congruence distributivity. 
 In all the above statements
$\alpha$ can be taken equivalently as a tolerance or as a congruence,
while $S$ and $T$ vary among reflexive and admissible relations.

We do not know
whether, for every
congruence distributive variety $\mathcal V$, there is 
$k$ such that $\mathcal V$ satisfies the relation identity
$R (S \circ T) \subseteq RS \circ_k RT$.
The existence of some $k$ as above  is equivalent to 
$R^*( S \circ  T )^* =
(R S \circ  RT )^*$.
 The arguments from
Gyenizse and  Mar\'oti 
\cite{GM}
can be adapted to show that if
$\mathcal V$ is congruence distributive and 
the free algebra $\mathbf F_{ \mathcal V}(3)$
 in $\mathcal V$ over $3$ elements
is finite, then 
$R^*( S \circ  T )^* =
(R ^*S \circ  R^*T )^*$.
It is easy to see  that $2$-distributivity does imply 
$R (S \circ T) \subseteq RS \circ RT$.
Cf.~\cite[Remark 17]{contol}.
We have a proof that if $\mathcal V$ has $4$ 
directed J{\'o}nsson terms
 $d_0, d_1, d_2, d _{3} $
and $\mathbf F_{ \mathcal V}(2)$
is finite, then 
$R( S \circ  T ) \subseteq 
R S \circ_k  RT $,
for some $k$ which depends on the variety.

Notice also that, since the composition of reflexive and admissible relations 
is still reflexive and admissible, we can get new identities by substitution, without
recurring to terms. 
E.~g., from $ \alpha (S \circ T) \subseteq \alpha S \circ \alpha T \circ \alpha S$,
replacing  $T$ by $T \circ S$, we get 
$ \alpha (S \circ T \circ S) \subseteq 
\alpha S \circ \alpha (T \circ S) \circ \alpha S
\subseteq
\alpha S \circ \alpha T \circ  \alpha S \circ \alpha T \circ \alpha S
$.
We leave similar computations to the interested reader,
observing that  arguments using terms, though
more involved, generally produce stronger results.

By the above considerations, one would be tempted to believe that 
it is always more convenient to use directed J{\'o}nsson terms,
rather than
 J{\'o}nsson terms. However,
the exact relation  between 
the (minimal) number of directed J{\'o}nsson terms 
and of J{\'o}nsson terms in a variety is not clear;
see \cite[Observation 1.2 and Section 7]{kkmm}.
Even in the case when we have the same number of 
J{\'o}nsson and of directed J{\'o}nsson terms,
there are situations in which it is more convenient to use the former
terms. For example, with $5$ J{\'o}nsson terms 
we have
$\alpha( \beta \circ \gamma  ) \subseteq \alpha \beta \circ _4 \alpha \gamma $,
while from  
$5$ J{\'o}nsson directed terms 
we obtain only
$\alpha( \beta \circ \gamma ) \subseteq \alpha \beta \circ _6 \alpha \gamma $
from 
\eqref{arss}.
Notice that here we  count terms 
including the initial and final projections
(that is,  $5$ terms correspond to the case $k=4$). 

On the other hand, an example in which it is more convenient to use 
directed terms  is Baker example from \cite{B}, 
the variety $\mathcal V$ generated by the
polynomial  reducts of lattices in which 
only the ternary operation $ f(a,b,c) =a \wedge (b \vee c)$ 
is considered. See also \cite[Example 2.12]{cd}. 
Baker showed that $\mathcal V$ is $\Delta_4$
but not $\Delta_3$, that is, in the present terminology,
$ J _{ \mathcal V} (1) =3 $, from which we get from Corollary \ref{ell} that 
$ J _{ \mathcal V} ( \ell ) \leq 3 \ell  $.
However, taking  $d_1(x,y,z)=x \wedge (y \vee z)$, 
$d_2(x,y,z)=z \wedge (x \vee y) $ and $d_0$ and  $d_3$
the first and last projections, we get a sequence of 
directed J{\'o}nsson terms, hence we can apply  
Proposition \ref{kk}  with $k=3$ in order to get
the better evaluation 
$ J _{ \mathcal V} ( \ell -1 ) \leq 2\ell -2 $,
for $\ell$ odd and
$ J _{ \mathcal V} ( \ell -1 ) \leq 2\ell  -1 $,
for $\ell$ even.

We can introduce a version of $ J _{ \mathcal V} $ 
which takes into account reflexive and admissible relations
in place of congruences. For $m \geq 1$, we 
let $ J^r _{ \mathcal V} (m) $ be the least 
$k$ such that   
$\alpha ( S \circ _{ m+1} T ) \subseteq
\alpha  S \circ _{ k+1} \alpha T $;
similarly,  
$ J _{ \mathcal V}^{r \smallsmile }$
is defined by considering 
$\alpha  T \circ _{ k+1} \alpha S $ 
on the right-hand side.
The definitions are justified in view of 
\cite{JD,kkmm} and 
Proposition \ref{kk}.
We can thus formulate the following general problem.

\begin{JDSPG} 

\ 

\noindent 
\emph{\ 
Which quadruplets of functions  can be realized as
 $(J _{ \mathcal V}, J _{ \mathcal V}^{\smallsmile },
\allowbreak J _{ \mathcal V}^{r}, J _{ \mathcal V}^{r \smallsmile }) $,
for some congruence distributive variety  $\mathcal V$?}
\end{JDSPG}

In the above problem 
we could take into account additional conditions. For example,
consider the formula
$\alpha( \beta \circ \gamma \circ \delta \circ \beta \dots )
\subseteq \alpha\beta \circ \alpha \gamma \circ \alpha \delta \circ \alpha \beta 
\dots$,
 with $m$ 
factors on the left-hand side.
Which is the smallest number of factors
on the right-hand side such that
the formula holds in a specific congruence
distributive variety? 
Similarly, which is the best bound in
$(\alpha_1 \circ_m \beta _1)(\alpha_2 \circ_m \beta _2)
\subseteq  \alpha _1 \alpha _2 \circ \alpha _1 \beta _2 \circ  
\beta  _1 \alpha _2 \circ \beta  _1 \beta _2  \circ  \alpha _1 \alpha _2  \dots$?
Or, more generally, for
$(\alpha_1 \circ_m \beta _1)(\alpha_2 \circ_m \beta _2)
\dots (\alpha_h \circ_m \beta _h)$? 
What about 
$(\alpha \circ_m \beta )(\alpha \circ_m \gamma )( \beta  \circ_m \gamma )
\subseteq
\alpha \beta \circ \beta \gamma \circ \alpha \gamma \circ \alpha \beta  \dots$? 

Of course, the analogue of Proposition \ref{nip}
holds for $J _{ \mathcal V}^{\smallsmile }$,
$ J _{ \mathcal V}^{r}$ and $ J _{ \mathcal V}^{r \smallsmile }$,
too, using similar arguments.

\section{An unexpected  connection with 
the modularity spectra} \labbel{msp}

One can also define functions analogous to 
$ J _{ \mathcal V} $ in the case of congruence modular varieties.
In the present section we  give up  the convention that every 
variety at hand is congruence distributive.
By a fundamental theorem by A. Day \cite{D}, a variety
$\mathcal V$  is congruence modular
if and only there is some $k$ such that  the congruence identity
\begin{equation}\labbel{d}    
\tag{D$_k$}
\alpha ( \beta \circ \alpha \gamma \circ \beta )
\subseteq \alpha \beta \circ _k \alpha \gamma 
  \end{equation}
 holds in $\mathcal V$. 
Again, Day result
is  stated 
in a form involving a certain number of terms, but we 
shall not need the explicit Day terms here.
A variety $\mathcal V$ is \emph{$k$-modular}
if $\mathcal V$ satisfies equation \eqref{d}, and 
$\mathcal V$ has \emph{Day level} $k$ if such a $k$ is minimal.   
For a congruence modular variety $\mathcal V$, we can define the
 \emph{Day modularity function} $D _ { \mathcal V}$ as follows.
For $m \geq 3$,    $D _ { \mathcal V}(m)$ is the least $k$ 
such that 
$ \alpha ( \beta \circ_m \alpha \gamma )
\subseteq \alpha \beta \circ _k \alpha \gamma 
$ holds in $\mathcal V$.
The arguments from \cite{D}  
show that  $D _ { \mathcal V}(m)$ is defined for every 
$m$ and every congruence modular variety $\mathcal V$,
but the methods from \cite{D} do not furnish the best value.  
See \cite{cm}.

The case of congruence modularity is substantially
more involved than the distributivity case treated here. 
 Gumm \cite{G1,G2} 
 provided another characterization of congruence modular
varieties
by considering terms which ``compose permutability with distributivity''.
In detail, \emph{Gumm terms} are terms $p, j_1, \dots, j_k$   
satisfying 
the above conditions \eqref{j2}-\eqref{j4}
(the distributivity part, involving only 
the $j$'s), together with the following permutability part:
\begin{equation}\labbel{p} 
\tag{P}
x= p(x,z,z), \quad \text{ and } \quad p(x,x,z)= j_1(x,x,z) 
 \end{equation}

Notice that the definition given here
 is slightly different from 
Gumm original  conditions in \cite[Theorem 1.4]{G1},
where odd and even are exchanged and
where $p$ is considered after the 
$j_i$'s (the $q_i$'s in the notation from \cite{G1}).  This is not simply a matter of symmetry:
in the formulation by Gumm,
when $n$ is odd,
 one gets an unnecessary term
which can be discarded, hence Gumm actual  condition is interesting
only for $n$ even.   Of course,
the above remark is interesting only when one is
concerned with  the smallest $k$ (or $n$) for which there are Gumm
terms. If one is concerned just with the existence of some
$k$ for which such terms exist, then the remark is irrelevant.
As far as we know, the above formulation 
 first appeared in  Lakser, Taylor and Tschantz
\cite{LTT} and Tschantz \cite{T}.

Using Gumm terms, 
 Tschantz \cite{T} showed that a variety
$\mathcal V$ 
 is congruence modular
if and only if the congruence identity 
$ \alpha ( \beta + \gamma ) \subseteq  
\alpha (\gamma \circ \beta )
\circ  ( \alpha \gamma + \alpha \beta )$ 
holds in $\mathcal V$.
Hence it is also natural to introduce the
 \emph{Tschantz modularity function} $T_ { \mathcal V}$ 
for a congruence modular variety $\mathcal V$ 
in such a way that, for
 $m \geq 2$,    $T _ { \mathcal V}(m)$ is the least $k$ 
such that the following congruence identity holds in $\mathcal V$
\begin{equation*}\labbel{Teq}
 \alpha ( \beta \circ_{m} \gamma )
\subseteq 
\alpha (\gamma \circ \beta ) \circ (\alpha \gamma  \circ _{k} \alpha \beta ) 
  \end{equation*}    

Notice that 
$T _ { \mathcal V}(2)=0$
is equivalent to congruence permutability,
just take $\alpha=1 $  
(by convention we let $ \beta  \circ_0 \gamma  = 0$). More
generally, $T _ { \mathcal V}(2)=k$
if and only if $\mathcal V$ has 
$k+2$ Gumm terms  
$p, j_1, \dots, j_{k+1}$,
but   not 
$k+1$ Gumm terms. 
In the above terminology, equation \eqref{4c}
in Theorem \ref{4gt} states that
if   $T _ { \mathcal V}(2)=1$,
then  $T _ { \mathcal V}(m+2) \leq m$,
for $m \geq 2$.

The relationships between
 $D _ { \mathcal V}$
 and $T _ { \mathcal V}$
appear rather involved. 
A detailed study of their connection
goes beyond 
the scope of the present note; we refer
to \cite{cm} for further details.
We just mention that we can define the $ ^\smallsmile $-variants
of the above notions.
Moreover, Kazda,  Kozik,  McKenzie and Moore
\cite{kkmm} 
 introduced the directed variants of Gumm terms, too.
Using  their result we can see that a variety is congruence 
modular if and only if, for every $m \geq 1$,
there is some $h$ such that 
$ \alpha ( S \circ_{m+1} T )
\subseteq 
\alpha (T \circ S ) \circ (\alpha T \circ _{h} \alpha S) 
$, 
where, as above, $S$ and $T$
 vary among reflexive and admissible relations.    
See \cite{ricm}.

We finally note  an 
intriguing connection
among the  J{\'o}nsson and the modularity spectra.
We first need a preliminary lemma.

\begin{lemma} \labbel{gt}
If a variety $\mathcal V$ has $k+2$ Gumm terms $p, j_1, \dots, j_{k+1}$,
then $\mathcal V$ satisfies the following congruence identity
(as above, $\alpha$ can be taken as a tolerance).
\begin{equation*}\labbel{t3} 
\alpha ( \beta \circ \gamma \circ \beta ) 
\subseteq 
\alpha (\gamma \circ \beta ) \circ (\alpha \gamma  \circ _{2k} \alpha \beta ) 
 \end{equation*}     
 \end{lemma}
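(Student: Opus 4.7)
My plan is to adapt the classical J\'onsson-style chain construction, using the Gumm term $p$ to build the initial $\alpha(\gamma \circ \beta)$-step and the $j_i$'s to build the subsequent $2k$ alternating $\alpha\gamma$/$\alpha\beta$-steps. Given $(a,d) \in \alpha(\beta \circ \gamma \circ \beta)$ witnessed by elements $a \mathrel{\beta} b \mathrel{\gamma} c \mathrel{\beta} d$ with $a \mathrel{\alpha} d$, I would introduce $e_i = j_i(a,b,d)$ and $f_i = j_i(a,c,d)$ for $1 \leq i \leq k+1$. Then $e_{k+1} = f_{k+1} = d$ by $j_{k+1}(x,y,z) = z$, while $e_i \mathrel{\gamma} f_i$ comes from substitution in the second argument, and every $e_i, f_i$ is $\alpha$-related to $a$ via $j_i(a,y,a) = a$ together with $a \mathrel{\alpha} d$.

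Next I would exploit the two parts of the J\'onsson identity to produce single-step transitions. For even $i$, from $j_i(x,x,z) = j_{i+1}(x,x,z)$ together with $a \mathrel{\beta} b$ one gets $e_i \mathrel{\alpha\beta} e_{i+1}$; dually, for odd $i$, from $j_i(x,z,z) = j_{i+1}(x,z,z)$ together with $c \mathrel{\beta} d$ one gets $f_i \mathrel{\alpha\beta} f_{i+1}$. Weaving these with the $\alpha\gamma$-swaps $e_i \leftrightarrow f_i$ produces the chain
\[
e_1 \to f_1 \to f_2 \to e_2 \to e_3 \to f_3 \to f_4 \to e_4 \to e_5 \to \cdots,
\]
whose successive arrows are $\alpha\gamma, \alpha\beta, \alpha\gamma, \alpha\beta, \dots$ in alternation. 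A quick parity check shows that after exactly $2k$ arrows the chain lands on $e_{k+1}$ when $k$ is even and on $f_{k+1}$ when $k$ is odd, both of which equal $d$; so $e_1$ is connected to $d$ by $\alpha\gamma \circ_{2k} \alpha\beta$.

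To attach the initial $\alpha(\gamma \circ \beta)$-step from $a$ to $e_1 = j_1(a,b,d)$, the $\alpha$-side is immediate from $j_1(a,b,a) = a$ and $a \mathrel{\alpha} d$, while for the $\gamma \circ \beta$-side I would use the Gumm term $p$ via
\[
a = p(a,c,c) \mathrel{\gamma} p(a,b,c) \mathrel{\beta} p(a,a,c) = j_1(a,a,c) \mathrel{\beta} j_1(a,b,c) \mathrel{\beta} j_1(a,b,d),
\]
where the three consecutive $\beta$-steps collapse by transitivity of $\beta$. The Gumm identity $p(x,x,z) = j_1(x,x,z)$ is exactly what is used here, and its absence in the pure J\'onsson setting is what explains the shift between ``$k+2$ Gumm terms'' and ``$2k$ alternating factors'' in the statement.

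The main obstacle I foresee is the tolerance strengthening: if $\alpha$ is only a tolerance then $\alpha\beta$ and $\alpha\gamma$ lose transitivity, and arguments of the form ``$x \mathrel{\alpha} a \mathrel{\alpha} y$, hence $x \mathrel{\alpha} y$'' or ``$x \mathrel{\alpha\beta} u \mathrel{\alpha\beta} y$, hence $x \mathrel{\alpha\beta} y$'' are no longer available. To bypass this I would import the Cz\'edli--Horv\'ath nested-term device already used in the proof of Proposition \ref{kk}: rewrite each element of the form $j_i(a,y,d)$ as $j_i(j_\ell(a,y,a), y, j_\ell(d,y,d))$ so that a single substitution $a \mapsto d$ in a suitable outermost position, combined with $j_i(z,y,z) = z$, turns every required $\alpha$-link between two consecutive chain elements into a literal one-step application of admissibility of $\alpha$. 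With this modification in place the same chain works verbatim for tolerances.
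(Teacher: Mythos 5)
Your proof is correct and follows essentially the same route as the paper: use the $j_i$'s to build an alternating $\alpha\gamma/\alpha\beta$-chain among the elements $j_i(a,\cdot,d)$, attach the initial $\alpha(\gamma\circ\beta)$-step via the Gumm term $p$ and the identity $p(x,x,z)=j_1(x,x,z)$, and handle the tolerance case with the Cz\'edli--Horv\'ath nesting trick. The only cosmetic difference is that the paper first obtains $(j_1(a,a,d),d)\in(\alpha\beta\circ\alpha\gamma\circ\alpha\beta)^k=\alpha\beta\circ_{2k+1}\alpha\gamma$ and then absorbs the leading $\alpha\beta$ into $\alpha(\gamma\circ\beta)$, whereas you start the chain directly at $e_1=j_1(a,b,d)$ (which is exactly the alternative the paper mentions parenthetically), so the absorption is already built in and the parity bookkeeping is displayed explicitly rather than deduced by factor-counting.
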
 

 \begin{proof} 
If we do not care about the exact number of factors
on the right-hand side, the proposition follows from \cite{T}. 
The arguments from \cite{T} seem
 to produce a much larger number of factors, anyway. 

Suppose that
$(a,d) \in \alpha ( \beta \circ \gamma \circ \beta ) $,
thus 
$a \mathrel \alpha  d$ and 
$a \mathrel \beta b \mathrel \gamma c \mathrel \beta d$,
for some $b$ and $c$.
Using equations \eqref{j2}-\eqref{j4},
 no essentially new argument is needed to show that
$(j_1(a,a,d),d) 
\in (\alpha \beta \circ \alpha \gamma \circ \alpha \beta) ^{k} $.

Indeed, by \eqref{j2}, 
$j_i(a,e,d)  =
j_i(j_{i'}(a,e',\bm a),e,j_{i'}(\bmw d,e',d))
\mathrel \alpha  
j_i(j_{i'}(a,e', \allowbreak \bm d),e, \allowbreak j_{i'}(\bmw a,e',d))
= j_{i'}(a,e',d)$,
for all indices  $i, i'$ and elements $e, e'$, hence all the elements
of the above form
are $\alpha$-related.    
For $i$ odd, we have 
$j_i(a,a,d) \mathrel { \beta }  j_i(a,b,d)\mathrel { \gamma }  j_i(a,c,d) 
\mathrel { \beta }  j_i(a,d,d)   $,
thus $(j_i(a,a,d), \allowbreak j_i(a,d,d)) \in \alpha \beta \circ \alpha \gamma \circ \alpha \beta $.
Similarly,
$(j_i(a,d,d),j_i(a,a,d)) \in \alpha \beta \circ \alpha \gamma \circ \alpha \beta $,
for $i$ even.
Hence 
$j_1(a,a,d)$,
$j_1(a,d,d) = j_2(a,d,d)$,
$j_2(a,a,d) = j_3(a,a,d)$, \dots \ 
witness    
$(j_1(a,a,d),d) \in 
(\alpha \beta \circ \alpha \gamma \circ \alpha \beta) ^{k} =
 \alpha \beta \circ_{2k+1} \alpha \gamma $.

Moreover, 
$a= p(a, b, \bm b) \mathrel \gamma 
p(a, \bm b, \bm c) \mathrel  \beta
 p(a, \bm a,  \bm d )
= j_1(a,a,d)$, by \eqref{p}, 
hence
$(a,j_1(a,a,d)) \in \alpha ( \gamma \circ \beta )$, 
thus
$(a,d) \in \alpha (\gamma \circ \beta ) \circ
 (\alpha \beta \circ_{2k+1} \alpha \gamma )$. 
Finally,
$\alpha (\gamma \circ \beta ) \circ
 \alpha \beta = \alpha (\gamma \circ \beta )$
(or, better, use $j_1(a,b,d)$ in place of
$j_1(a,a,d)$),
hence one more factor is absorbed and
we get the conclusion. 
\end{proof}

\begin{theorem} \labbel{jgt}
Suppose that  $V$ is a congruence distributive variety.
  \begin{enumerate}    
\item 
If  $m \geq 2$ and $\mathcal V$ is $m$-modular, then
$  J _{ \mathcal V} (2) 
\leq  J ^\smallsmile  _{ \mathcal V} (1) + 2m^2-2m -2$
and  $  J ^\smallsmile  _{ \mathcal V} (2)
 \leq  J  _{ \mathcal V} (1) + 2m^2-2m -2 $.
\item
If $\mathcal V$ has $k+2$ Gumm terms, then
$  J _{ \mathcal V} (2) 
\leq  J ^\smallsmile  _{ \mathcal V} (1) + 2k  $
and  $  J ^\smallsmile  _{ \mathcal V} (2)
 \leq  J   _{ \mathcal V} (1) + 2k $.
The bounds can be improved by $1$ if 
  $J ^\smallsmile  _{ \mathcal V} (1)$ is odd, 
respectively, if $J   _{ \mathcal V} (1)$ is even. 
  \end{enumerate} 
 \end{theorem}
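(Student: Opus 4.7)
The plan is to prove part (2) first via a direct factor-counting argument based on Lemma \ref{gt}, and then to deduce part (1) by invoking the classical passage from Day terms to Gumm terms.

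For part (2), set $j = J^\smallsmile_{\mathcal V}(1)$ and assume $\mathcal V$ has $k+2$ Gumm terms. Lemma \ref{gt} immediately gives
\[
\alpha(\beta \circ \gamma \circ \beta) \subseteq \alpha(\gamma \circ \beta) \circ (\alpha\gamma \circ_{2k} \alpha\beta).
\]
The next step is to unfold $\alpha(\gamma \circ \beta)$: exchanging the names of $\beta$ and $\gamma$ in the definition of $J^\smallsmile_{\mathcal V}(1)$ produces $\alpha(\gamma \circ \beta) \subseteq \alpha\beta \circ_{j+1} \alpha\gamma$, and substituting yields a chain of alternating $\alpha\beta$'s and $\alpha\gamma$'s beginning with $\alpha\beta$, of the form $\alpha\beta \circ_{j+1} \alpha\gamma \circ \alpha\gamma \circ_{2k} \alpha\beta$. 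The calculation then splits by parity: when $j$ is odd the first subchain terminates in $\alpha\gamma$ and merges with the initial $\alpha\gamma$ of the second subchain via transitivity of the congruence $\alpha\gamma$, leaving $j + 2k$ factors and giving $J_{\mathcal V}(2) \leq j + 2k - 1$; when $j$ is even no absorption occurs, producing $j + 1 + 2k$ factors and $J_{\mathcal V}(2) \leq j + 2k$. The second inequality of (2) is obtained symmetrically from the hypothesis $J_{\mathcal V}(1) \leq j'$, which in the swapped form reads $\alpha(\gamma \circ \beta) \subseteq \alpha\gamma \circ_{j'+1} \alpha\beta$; substitution then yields a chain beginning in $\alpha\gamma$, and absorption now occurs exactly when $j'$ is even, producing the improved bound $J^\smallsmile_{\mathcal V}(2) \leq j' + 2k - 1$ in that case and $J^\smallsmile_{\mathcal V}(2) \leq j' + 2k$ otherwise.

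For part (1), I would combine part (2) with the classical fact that $m$-modularity (i.e., the existence of $m + 1$ Day terms) implies the existence of $k + 2 = m^2 - m + 1$ Gumm terms; then $k = m^2 - m - 1$ gives $2k = 2m^2 - 2m - 2$, and the bounds of (2) produce exactly those claimed in (1). This conversion between Day and Gumm terms is treated in detail in \cite{cm}. The main technical obstacles are the careful bookkeeping of factor alternation and the absorption of adjacent repeated congruences when concatenating the chains, together with locating (or supplying) the quadratic bound $m^2 - m + 1$ on the number of Gumm terms obtainable from $m+1$ Day terms.
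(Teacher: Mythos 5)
Your proof is correct and follows essentially the same route as the paper: deduce (2) from Lemma \ref{gt} by unfolding $\alpha(\gamma\circ\beta)$ and counting alternating factors (the paper leaves this as ``immediate''), then obtain (1) from (2) via the quadratic Day-to-Gumm bound. The only minor slip is bibliographic: the specific bound $m^2-m+1$ on the number of Gumm terms produced by $m+1$ Day terms is taken in the paper from Lakser, Taylor and Tschantz \cite[Theorem 2]{LTT}, not from \cite{cm}.
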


\begin{proof} 
Part (2) is immediate from Lemma \ref{gt}.

In \cite[Theorem 2]{LTT} it is proved that if a variety 
is $m$-modular, then $\mathcal V$ has  $\leq m^2 - m +1$  Gumm terms.
Hence (1) follows from (2).   
\end{proof}  

There are versions of Lemma \ref{gt}
and of Theorem \ref{jgt}
obtained using Gumm directed terms \cite{kkmm}
in place of Gumm terms. See \cite{ricm}.

As far as we know,
there is the possibility that Theorem \ref{jgt}
is an empty result, namely that, say, for $m \geq 3$,  every congruence
 distributive variety with Day level $m$  has J{\'o}nsson
level $< 2m^2 -2m -2 $
(for, were this the case, then Corollary \ref{ell} would give a better
evaluation of $ J _{ \mathcal V} (2) $).
However, this would be a quite unexpected result.
What Theorem \ref{jgt} and the above comment
do show is that \emph{in any case} 
  the Day level of a congruence distributive
variety has an influence on the very low levels
of the J{\'o}nsson spectrum.
This connection appears rather surprising,
whichever of the above cases occurs.

\acknowledgement{We thank Antonio Pasini for encouragement
and for providing us with a lot of material
 when we were unexperienced in universal algebra.
We thank Keith Kearnes for useful correspondence.
 We thank the students of Tor Vergata
University for  stimulating discussions.}

\smallskip 

{\scriptsize
Though the author has done his best efforts to compile the following
list of references in the most accurate way,
 he acknowledges that the list might 
turn out to be incomplete
or partially inaccurate, possibly for reasons not depending on him.
It is not intended that each work in the list
has given equally significant contributions to the discipline.
Henceforth the author disagrees with the use of the list
(even in aggregate forms in combination with similar lists)
in order to determine rankings or other indicators of, e.~g., journals, individuals or
institutions. In particular, the author 
 considers that it is highly  inappropriate, 
and strongly discourages, the use 
(even in partial, preliminary or auxiliary forms)
of indicators extracted from the list in decisions about individuals (especially, job opportunities, career progressions etc.), attributions of funds, and selections or evaluations of research projects.
\par
}

\end{document}